 \numberwithin{equation}{section}
\newtheorem{theorem}{Theorem}[section]
\newtheorem{lemma}{Lemma}[section]
\newtheorem{definition}{Definition}[section]
\newtheorem{example}{Example}[section]
\newtheorem{corollary}{Corollary}[section]
\newtheorem{remark}{Remark}[section]
\begin{document}
\topmargin -5mm \oddsidemargin -1mm
\title{\Large \bf  On   classification of  singular matrix difference   equations of mixed order
\thanks{\footnotesize \textit{2010 Mathematicsg
Subject Classification}.   34B20, 39A27.
\newline ** The corresponding author. }
}
\author{\small   Li Zhu$^{a}$, Huaqing Sun$^{b,\; ** \;}$,   Bing Xie$^{c}$
\\ \small $^{a,b,c}$  Department of Mathematics, Shandong University at Weihai
\\\small Weihai, Shandong 264209, P. R. China
\\\small $^a$ zhulimathematics@163.com
\\\small $^b$ sunhuaqing@email.sdu.edu.cn
\\\small $^c$ xiebing@sdu.edu.cn}

\date{}\baselineskip 20pt
\maketitle \ \ {\bf{Abstract}---\rm{
This paper is concerned with  singular matrix difference equations of mixed order.
The existence and uniqueness of  initial value problems for  these equations are derived, and then
the    classification  of  them  is obtained with a similar classical Weyl's method  by selecting a suitable quasi-difference.
An equivalent characterization of this classification is given in terms of the number of linearly independent square summable solutions of the equation.
The influence of off-diagonal coefficients on the classification is  illustrated by two examples.
In particular, two limit point  criteria are established in terms of coefficients of the equation.

}}\textsc{}\vspace{-5mm}
\\

 {\sl\bf Keywords}: Block operator matrix; Matrix differential  equation; Matrix difference   equation; Limit point case; Limit circle case.
\section{{{\normalsize\bf }{\sc\bf\normalsize
Introduction}}}
\ \ \ \
Consider the   matrix difference  expressions of mixed order:
$$
{\mathcal{L}}\left(\begin{array}{l}
y_1  \\
y_2 \end{array}\right)(t):=\left\{\begin{array}{l}
  \left(\begin{array}{cc}
-  \nabla p  \Delta+q  & -\nabla c +h \vspace{2mm} \\
c  \Delta+h  & d
\end{array}\right)\left(\begin{array}{l}
y_1  \\
y_2
\end{array}\right)(t),~ t\in \mathcal{I},\vspace{2mm}\\
\left(\begin{array}{cc}
0  & 0 \vspace{2mm} \\
c  \Delta+h  & d
\end{array}\right)\left(\begin{array}{l}
y_1  \\
y_2
\end{array}\right)(t),~ t=a-1,  \end{array}\right.
$$
where $\mathcal{I}:=\{t\}_{t=a}^{+\infty}$ is an integer set with $a$ being a  finite integer;
$\nabla$ and $\Delta$  are the backward and forward difference operators, respectively, i.e., $\nabla y(t)=y(t)-y(t-1) $ and $\Delta y(t)=y(t+1)-y(t)$;
$p$, $q,$ $c$, $ h,$ and $d$ are real-valued functions on $\mathcal{I}':=\{t\}_{t=a-1}^{+\infty}$ with  $p \neq 0$. Let
$\lambda$ be a spectral parameter. Then, equation $\mathcal{L}(y)=\lambda y$ on $\mathcal{I}'$
 can be expressed as follows:
\begin{equation}\label{00}
\left\{\begin{array}{l}
-  \nabla\left(p(t) \Delta y_{1}(t)\right) +q(t)  y_{1}(t)- \nabla \left(c(t) y_{2}(t)\right)+h(t) y_{2}(t)=\lambda y_1(t), \ t\in \mathcal{I}, \vspace{3mm} \\
c(t) \Delta y_{1}(t) +h(t) y_{1}(t)+d(t) y_{2}(t)=\lambda y_2(t), \ t\in {\mathcal{I^{\prime}}} .
\end{array}\right. \tag{${{1.1}_{\lambda}}$}
\end{equation}
In the case of $h=c\equiv 0$ on ${\mathcal{I^{\prime}}} $ and  $d(t)\neq0$ for $t\in {\mathcal{I^{\prime}}},$  the equations \eqref{00}   becomes the classical
Sturm-Liouville difference equations\vspace{-2mm}
\begin{equation}\label{3}\tau (y_1)(t):=-  \nabla \left(p(t) \Delta y_1(t)\right)  +  {q}(t)  y_1(t) =\lambda y_1(t),~ t\in \mathcal{I}. \tag{1.2}\vspace{-1mm}\end{equation}
Therefore, equations \eqref{00} contain classical Sturm-Liouville difference equations as their special ones. Moreover,
if  $y=\left(y_1, y_2\right)^{\mathrm{T}}$ (the superscript $\mathrm{T}$ denotes the transpose of a vector) satisfies \eqref{00},   then the first component
$y_1$ is a solution of the following  Sturm-Liouville difference equation with coefficients depending rationally on the spectral parameter:\vspace{-2mm}
\begin{equation}\label{002}
-  \nabla \left(\widetilde{p}(t, \lambda) \Delta y_1(t) \right) +\tilde{q}(t, \lambda) y_1(t) =\lambda y_1(t),~ t\in \mathcal{I}, \tag{${1.3}$}\vspace{-2mm}
\end{equation}
where $\widetilde{p}(t, \lambda)$   and  $\widetilde{q}(t,\lambda)$ are given by
\begin{equation}\label{003}
\begin{array}{lll}
&\widetilde{p}(t, \lambda):=p(t)+\dfrac{c^{2}(t)}{\lambda-d(t)}-\dfrac{h(t)c(t)}{\lambda-d(t)},~ t\in {\mathcal{I^{\prime}}}\vspace{2mm} \\
&\widetilde{q}(t,\lambda):=q(t)+\dfrac{h^{2}(t)}{\lambda-d(t)}-\nabla \left(
 \dfrac{h(t)c(t)}{\lambda-d(t)} \right),~ t\in \mathcal{I}. \tag{1.4}
\end{array}
\end{equation}
In addition,   $y_2$ can be expressed in terms of   $y_1$ as follows:
\begin{equation}\label{004}
y_2(t)=\frac{c(t)}{\lambda-d(t)} \Delta y_1(t)+\frac{h(t)}{\lambda-d(t)} y_1(t),~ t\in {\mathcal{I^{\prime}}} .\tag{1.5} \vspace{-1mm}
\end{equation}
Conversely, if $y_1$ and  $y_2$  satisfy  \eqref{002} and  \eqref{004},
then  $ y_1$ and $y_2$ is   a solution of \eqref{00}.
Hence, equation  \eqref{00} is  equivalent   to  \eqref{002} and   \eqref{004}   when  $\lambda$  is  given such that  \eqref{003} and \eqref{004} are well-defined.

Matrix differential expressions of mixed order arise in fluid mechanics, magnetohydrodynamics, and quantum mechanics, etc.
Essential spectra of operators generated by a class of $3\times3$ matrix differential expressions of mixed order for ideal magnetohydrodynamics models  were studied by  Kako in \cite{Kako}.
This work was generalized and developed by many authors (cf., e.g., \cite{Faierman1, Faierman2,Hardt,Mennicken,Mo}), and then spectral properties of this class of differential expressions were gotten more clear understanding. Up to now, the spectral theory for this class of  differential expressions   has been studied intensively (cf., \cite{Brown,Ibrogimov,Ibrogimov2,Kurasov1,Kurasov2,Kusche,Qi1,Qi2} and the references cited therein).
It is noted that most existing relevant results  are concerned with the following
$2\times2$ matrix differential equations of mixed order:
\begin{equation}\label{2222}
\mathfrak{L} \left(\begin{array}{l}
y_1  \\
y_2 \end{array}\right)(t):=
  \left(\begin{array}{cc}
-  D p D+q  & -Dc +h  \\
\overline{c} D+\overline{h}  & d
\end{array}\right)\left(\begin{array}{l}
y_1  \\
y_2
\end{array}\right)(t)=\lambda\left(\begin{array}{l}
y_1  \\
y_2 \end{array}\right)(t) ,~ t\in (a,b),\tag{1.6}
\end{equation}
where $-\infty<a< b\leq \infty$; $p^{-1}$, $q,$ $c$, $ h,$ and  $d$ are local integrable functions on $(a,b)$ with  $p \neq 0$; $D = {\rm d/dt},$
$\lambda$ is a spectral parameter.
Although equations \eqref{2222} are more simple forms of matrix differential expressions of mixed order,
they may contain more complicated examples including $3\times3$ ones which were considered, e.g.,
 in \cite{Faierman1, Faierman2,Hardt,Mennicken,Mo}, when $c(t), h(t)\in \mathbb{C}^n$, and  $d(t)\in  \mathbb{C}^{n\times n}$, $t\in (a,b)$, and
 $\overline{c}(t)$ and $\overline{h}(t)$ are replaced by $c^*(t)$ and $h^*(t)$, where $c^*(t)$ denotes the complex conjugate transpose of $c(t)$, $t\in (a,b)$.  Essential spectra of equations \eqref{2222} with the above vector
  and matrix coefficients have been investigated  by Ibrogimov, Siegl, and  Tretter  in great detail under considerably weaker assumptions \cite{Ibrogimov}.
For the study of non-self-adjoint matrix differential expressions, the reader can be referred to \cite{Ibrogimov2}.
At the same time, the spectral theory for abstract block operator matrices has been  developed and some elegant results have been established for the various  essential spectra, spectral decomposition, spectral enclosure, spectral inclusion, quadratic numerical range, and Friedrichs extension
 (cf., \cite{Tretter,Tretter1,Langer,Langer1,Konstantinov,Jeribi,Jeribi2, Giribet,Atkinson,Ammar3}).
As everyone knows,  there are a large number of discrete mathematical models in applications. The spectral theory of
discrete  systems has attracted a great deal of interest (cf., \cite{Atkinson0,Jirari,Mingarelli,shiyu,Clark,P,Monaquel,Hinton} and the references cited therein).
 Equations \eqref{00} can be regarded as a discrete  analogue  of  the singular  equations \eqref{2222}.
However, as far as we know, there are little attention on
 equations \eqref{00} including the regular case and the singular case.

For classical differential operators, the Weyl-Titchmarsh theory is extremely useful in the spectral analysis,
which goes back to  H. Weyl's work  \cite{Weyl}. He initially classified
singular second-order symmetric  differential equations  into two cases: the limit point case and the limit circle case, based on geometrical properties of a certain limiting set.
In the limit circle case, the essential spectrum of the associated operator is empty. In addition, this classification is closely related to characterizations of
 self-adjoint extensions of the minimal operators generated by symmetric differential expressions. This work was followed and developed extensively and intensively and many good results have been obtained
for differential and difference expressions including symmetric and non-symmetric cases (cf., e.g., \cite{Behrndt11,   Behrndt1,Brown1, Coddington,  Hinton2, Titchmarsh,Jirari,shiyu,sun,Krall,Krall1,Monaquel,Muzzulini,Weidmann,P,Kogan,Behrndt}).
Some limit point and limit circle criteria have been established for singular differential and difference expressions \cite{Chen,Coddington,Jirari,sun,Weidmann,Mingarelli,Levinson,Everitt,Evans}. Sturm-Liouville  differential equations with coefficients depending rationally on the spectral parameter attracted people's interest in the past because of their floating singularities (cf., e.g., \cite{Hassi0,Atkinson2,Adamyan}).
Also, there is an analog of the limit point  and   limit circle classification for this class of singular differential equations \cite{Hassi0}.
Furthermore, equation \eqref{2222} satisfying certain conditions can also be classified into the limit point case and the limit circle case by transforming them into symmetric Hamiltonian systems \cite{Qi1,Qi2}. Especially,  a similar  classification has been made for more general equations \eqref{2222} with real coefficients  by using  classical Weyl's method \cite{Hassi}.

Unlike classical differential operators, singular matrix differential operators have their interesting and unexpected spectral properties.
Their essential spectrum
consists of two parts: a regular part and  a singular part (cf. e.g., \cite{Kurasov1, Ibrogimov, Hardt, Faierman2}). Since the second part appears  due to singularities of coefficients at the endpoint, it is empty when
the matrix differential expression is in the limit circle case. Therefore, this classification  is crucial for the study of spectral properties of singular matrix differential  equations.
Inspired by the work of \cite{Hassi}, we shall consider the  classification  of equations \eqref{00} by using  Weyl's method, and investigate spectral properties of them  in the subsequent study.
In this paper,  the existence and uniqueness of  initial value problem of  equation \eqref{00}  are derived, and then
the   classification  is obtained by selecting a suitable quasi-difference.
Similarly to classical differential systems, it is proved that an equivalent characterization of this classification can also be given
in terms of the number of linearly independent square summable solutions of equation \eqref{00}.
The influence of off-diagonal coefficients $c(t)$ and $h(t)$ on this classification is illustrated by two examples.
In particular, two  limit point criteria are established in terms of coefficients of equation \eqref{00}.

The paper is organized as follows.
In Section 2,  the Green's formula  and the existence and uniqueness of  initial value problem for equation \eqref{00} are derived.
In Section 3,  the  classification is shown, and the equivalent characterization  is given.
Section 4 is devoted to the influence of off-diagonal  coefficients on  this classification.
Section 5 gives  two limit-point criteria.

\section{{{\sc\normalsize\bf Preliminaries
}}}\baselineskip 20pt
\ \ \ \ In this section,   the Green's formula for $\mathcal{{L}}$ or $({1.1}_{\lambda})$ is obtained, and   the existence and uniqueness of  initial value problem for  $({1.1}_{\lambda})$ are derived.

First, let $l (\mathcal{I})=\big\{y =   {\{y(t)\}}^{+\infty}_{t=a-1} \subset \mathbb{C}^{2} \big\}  $ and  then
we introduce the following  space:
$$ l^{2}(\mathcal{I}): = \left\{y \in l (\mathcal{I}): \sum_{t \in \mathcal{I}}  y^{*}(t)  y(t)<+\infty \right\}$$
with the  inner product
${\langle y, z\rangle}: = \sum_{t \in \mathcal{I}} z^{*}(t)  y(t),$ where $ z^{*}$ denotes the complex conjugate transpose  of $ z $.
The induced norm is  ${\|y\|} : ={ {\langle y, y\rangle}}^{\frac{1}{2}} $  for $  y \in  l^{2}(\mathcal{I}).$
For  $ N \in \mathcal{I}\setminus \{a\},$  set
${\mathcal{I}}_N:=\{t\}_{t=a}^{N}$,
$$l (\mathcal{I}_{N})=\big\{y =   {\{y(t)\}}^{N+1}_{t=a-1} \subset \mathbb{C}^{2} \big\},$$
and let the definition of  $ l^{2}({\mathcal{I}}_N)$ be    similar to that of $ l^{2}({\mathcal{I}})$ with ${\mathcal{I}} $ replaced by $ {\mathcal{I}}_N.$ By${\langle\cdot, \cdot\rangle}_{N}  $ and  ${\|\cdot\|}_{N}, $  we denote the inner product and its induced norm of $ l^{2}({\mathcal{I}}_N)$.

Next, for  $y =(y_{1} , y_{2}  )^{\mathrm{T}}\in l (\mathcal{I})$,  the quasi-difference operator $y^{[1]}(t)$  is defined by  \vspace{-2mm}
\begin{equation}\label{0041}
y^{[1]}(t):=p(t)\Delta y_1(t)+c(t)y_2(t), ~ t \in {\mathcal{I^{\prime}}}.
\end{equation}
Further,       the Lagrange bracket of $y =(y_{1} , y_{2}  )^{\mathrm{T}}$ and $z =(z_{1} , z_{2}  )^{\mathrm{T}} $  is defined by
\begin{equation}\label{005}
[y(t), z(t)]:=y_{1}(t+1) \overline{z}^{[1]}(t)-y^{[1]}(t) \overline{z}_{1}(t+1), ~ t \in {\mathcal{I^{\prime}}}.\vspace{-2mm}
\end{equation}
Then, the Green's formula for $\mathcal{{L}}$ or $({1.1}_{\lambda})$ can be given as follows.
\begin{lemma}\label{21}
For   $y, z \in l ({\mathcal{I}}_N)$, it holds that
\begin{equation}\label{006}
\langle \mathcal{{L}}(y), z\rangle_{N}- \langle y, \mathcal{{L}}(z)\rangle_{N} ={[y(t), z(t)]\Big|_{t=a-1}^{N}}. \vspace{-2mm}
\end{equation}
\end{lemma}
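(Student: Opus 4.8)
The plan is to verify \eqref{006} by a direct summation-by-parts argument, handling separately the contribution of the singular row at $t=a-1$ and the contribution of the regular rows $t\in\mathcal{I}_N$. First I would write out $\langle\mathcal{L}(y),z\rangle_N-\langle y,\mathcal{L}(z)\rangle_N$ using the definition of the inner product $\langle\cdot,\cdot\rangle_N$ and the two-branch definition of $\mathcal{L}$ from the introduction. The key observation is that the scalar terms $q(t)y_1(t)\overline{z_1(t)}$, $d(t)y_2(t)\overline{z_2(t)}$, and the cross terms $h(t)y_2(t)\overline{z_1(t)}$, $h(t)y_1(t)\overline{z_2(t)}$ all cancel pairwise once one subtracts the two inner products, because $q,d,h$ are real-valued; so only the terms involving the difference operators $\nabla$ and $\Delta$ survive. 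This reduces the identity to a purely telescoping computation.

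Next I would collect the surviving terms into a single "discrete Wronskian-type" sum. Concretely, the contribution from the first component of $\mathcal{L}$ on $\mathcal{I}_N$ produces $\sum_{t=a}^{N}\big[-\nabla(p(t)\Delta y_1(t))-\nabla(c(t)y_2(t))\big]\overline{z_1(t)}$ minus the analogous expression with $y$ and $z$ interchanged and conjugated, while the contribution from the second component produces terms $c(t)\Delta y_1(t)\,\overline{z_2(t)}$ (over $t=a-1,\dots,N$) against their counterparts. The idea is to apply the discrete analogue of integration by parts, i.e. the summation-by-parts formula $\sum_{t=a}^N \nabla(f(t))g(t)=f(t)g(t)\big|-\sum f(t-1)\Delta g(t-1)$ type identities, to move the $\nabla$ off the $p\Delta y_1$ and $c y_2$ terms. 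After regrouping, all interior sums should cancel against the symmetric $y\leftrightarrow z$ terms and against the $t=a-1$ row, leaving only boundary contributions at $t=a-1$ and $t=N$. Finally I would identify that boundary contribution with $y_1(t+1)\overline{z}^{[1]}(t)-y^{[1]}(t)\overline{z}_1(t+1)$ evaluated at $t=N$ minus at $t=a-1$, using the definitions \eqref{0041} of $y^{[1]}$ and \eqref{005} of the Lagrange bracket; here one checks that $p(t)\Delta y_1(t)+c(t)y_2(t)=y^{[1]}(t)$ is exactly the combination that appears.

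The main obstacle, and the place requiring care rather than cleverness, is the bookkeeping at the lower endpoint $t=a-1$: the operator $\mathcal{L}$ is defined differently there (the first row is zero), so the sum $\langle\mathcal{L}(y),z\rangle_N$ as written only runs over $\mathcal{I}_N=\{a,\dots,N\}$, yet the quasi-difference $y^{[1]}(a-1)$ and the bracket $[y(a-1),z(a-1)]$ involve the values $y(a-1),z(a-1)$. One must track precisely which shifted index each difference term carries, so that when summation by parts is applied to $\sum_{t=a}^N\nabla(\,\cdot\,)(t)$ the emerging boundary term is evaluated at $t=a-1$ rather than $t=a$, and confirm that the contribution of the second (non-trivial) row at $t=a-1$ combines correctly to yield precisely $-[y(a-1),z(a-1)]$. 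Once the index alignment is pinned down, the identity follows by telescoping and the cancellations above; I would present this as a short computation rather than belaboring each reindexing step.
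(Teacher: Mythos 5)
Your proposal is correct and follows essentially the same route as the paper: expand $\langle \mathcal{{L}}(y), z\rangle_{N}- \langle y, \mathcal{{L}}(z)\rangle_{N}$ over $t\in\mathcal{I}_N$, cancel the $q$, $h$, $d$ terms by realness, and telescope the remaining $\nabla$/$\Delta$ terms (a discrete Lagrange identity) into the boundary expression $p(t)\bigl(y_1(t)\Delta\overline{z}_1(t)-\overline{z}_1(t)\Delta y_1(t)\bigr)+c(t)\bigl(y_1(t+1)\overline{z}_2(t)-y_2(t)\overline{z}_1(t+1)\bigr)\big|_{t=a-1}^{N}$, which is identified with $[y(t),z(t)]\big|_{t=a-1}^{N}$ via \eqref{0041}. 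The only clarification needed is at your flagged obstacle: since $\langle\cdot,\cdot\rangle_N$ sums only over $t=a,\dots,N$ (not $t=a-1$), the second-row branch of $\mathcal{L}$ at $t=a-1$ never enters the computation, and the boundary term at $t=a-1$ arises solely from the telescoping of the $\nabla$ terms at $t=a$.
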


\begin{proof}
Using \eqref{0041}, we have
\begin{equation*}
\begin{aligned}
&\langle \mathcal{{L}}(y), z\rangle_{N}- \langle y, \mathcal{{L}}(z)\rangle_{N}\\
=&
\sum_{t \in \mathcal{I}_N}  \Big\{\left(\overline{z}_{1}, \overline{z}_{2} \right)\Big(\begin{array}{c}
-  \nabla \left(p \Delta y_{1}+c y_{2}\right) +q  y_{1}+h y_{2} \\ \vspace{-2mm}
c \Delta y_{1} +h y_{1}+d y_{2}
\end{array}\Big)(t)\\
&   -  \Big( -  \nabla \left(p \Delta \overline{z}_{1}+c \overline{z}_{2}\right) +q \overline{ z}_{1}+h \overline{z}_{2}, c \Delta \overline{z}_{1} +h \overline{z}_{1}+d \overline{z}_{2}
\Big)  \Big( \begin{array}{c} {y}_{1}  \\    {y}_{2}  \end{array}\Big) (t) \Big\}
\\
=& {  \Big[p(t) \big(y_1(t) \Delta  \overline{{z}}_{1}(t) -  \overline{{z}}_{1}(t) \Delta  y_1(t) \big)}
 {  + c(t)\big(y_1(t+1)  \overline{{z}}_{2}(t)- y_2(t)   \overline{{z}}_{1}(t+1)\big)  \Big]}\Big|_{t=a-1}^{N}\\
= & \Big[y_{1}(t+1) \overline{z}^{[1]} (t)- y^{[1]}(t)\overline{z}_{1}(t+1)\Big] \Big|_{t=a-1}^{N}\\ \vspace{-2mm}
= &[y(t), z(t)]\Big|_{t=a-1}^{N}.
\end{aligned}
\end{equation*}
This completes the proof.
\end{proof}

The following  lemma  is a consequence of the Green's formula given by \eqref{006}.
\begin{lemma}\label{consequence}
Let  $\varphi(t, \lambda)$ be a solution of $({1.1}_{\lambda})$ and   $\psi(t, \mu)  $  a solution of $({1.1}_{\mu})$. Then\vspace{-2mm}
\begin{equation}\label{020}
{(\lambda-\overline{\mu})} \sum_{t \in \mathcal{I}_N}    {\psi^*(t, \mu)}\varphi(t, \lambda)   = {[\varphi(t, \lambda), \psi(t, \mu)]\big|_{t=a-1}^{N}}
\end{equation}
holds for all $N \in \mathcal{I}$.
\end{lemma}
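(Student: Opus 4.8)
The plan is to obtain Lemma~\ref{consequence} as an immediate corollary of the Green's formula \eqref{006}. First I would apply Lemma~\ref{21} with $y=\varphi(\cdot,\lambda)$ and $z=\psi(\cdot,\mu)$, both regarded as elements of $l(\mathcal{I}_N)$ by restriction; this is legitimate because \eqref{006} holds for arbitrary sequences in $l(\mathcal{I}_N)$, with no equation assumed. This step gives
\[
\langle \mathcal{L}(\varphi),\psi\rangle_N-\langle \varphi,\mathcal{L}(\psi)\rangle_N=[\varphi(t,\lambda),\psi(t,\mu)]\big|_{t=a-1}^{N}.
\]

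Next I would use that $\varphi$ solves $({1.1}_{\lambda})$ and $\psi$ solves $({1.1}_{\mu})$, so that $\mathcal{L}(\varphi)(t)=\lambda\,\varphi(t,\lambda)$ and $\mathcal{L}(\psi)(t)=\mu\,\psi(t,\mu)$ for all $t\in\mathcal{I}$, hence in particular for $t\in\mathcal{I}_N$ — and only the values on $\mathcal{I}_N$ enter the two truncated inner products. Substituting these identities, and recalling that $\langle\cdot,\cdot\rangle_N$ is linear in its first argument and conjugate-linear in its second, so that $\langle\varphi,\mu\psi\rangle_N=\overline{\mu}\,\langle\varphi,\psi\rangle_N$, the left-hand side collapses to
\[
\lambda\,\langle\varphi,\psi\rangle_N-\overline{\mu}\,\langle\varphi,\psi\rangle_N=(\lambda-\overline{\mu})\sum_{t\in\mathcal{I}_N}\psi^{*}(t,\mu)\,\varphi(t,\lambda),
\]
which is precisely the left-hand side of \eqref{020}. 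Equating this with the boundary term displayed above yields \eqref{020} for every $N\in\mathcal{I}$, finishing the proof.

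Since the argument is essentially bookkeeping, I do not expect any real obstacle; the only point needing a little care is the placement of the complex conjugate. One must invoke the Green's formula with $\mathcal{L}(\psi)=\mu\psi$ (not $\overline{\mu}\psi$) and then let the conjugation be produced by the second slot of the inner product, which is what turns $\lambda-\mu$ into $\lambda-\overline{\mu}$. It is also worth remarking that the degenerate first row of $\mathcal{L}$ at $t=a-1$ plays no role here: the summation in \eqref{006} runs only over $\mathcal{I}_N=\{a,\dots,N\}$, and the point $t=a-1$ enters solely through the Lagrange bracket, which is well defined there via \eqref{0041} and \eqref{005}.
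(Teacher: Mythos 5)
Your proposal is correct and follows essentially the same route as the paper: apply the Green's formula of Lemma \ref{21} to $y=\varphi(\cdot,\lambda)$, $z=\psi(\cdot,\mu)$, substitute $\mathcal{L}(\varphi)=\lambda\varphi$ and $\mathcal{L}(\psi)=\mu\psi$, and use the sesquilinearity of $\langle\cdot,\cdot\rangle_N$ to produce the factor $\lambda-\overline{\mu}$. Your added remarks on the placement of the conjugate and on the harmless role of the degenerate row at $t=a-1$ are accurate but not needed beyond what the paper records.
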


\begin{proof}
Since $\mathcal{{L}}(\varphi)=\lambda \varphi $ and $\mathcal{{L}}(\psi)=\mu \psi,$
we have \vspace{-2mm}
$$ \langle\mathcal{{L}}(\varphi), \psi\rangle_{N}- \langle \varphi, \mathcal{{L}}(\psi)\rangle_{N}= (\lambda-\overline{\mu}) \sum_{t \in \mathcal{I}_N}    {\psi^*(t, \mu)}\varphi(t, \lambda).$$
Then, \eqref{020} holds by Lemma \ref{21}.
This completes the proof.
\end{proof}

Now, set  $\sigma(f) :=\{\lambda\in \mathbb{R}:  \inf_{t\in {\mathcal{I^{\prime}}} }|\lambda- f(t)|=0\} $ and    $\Omega(f):=  \mathbb{C} \setminus \sigma(f)$  for a  function  $f(t),\ t\in {\mathcal{I^{\prime}}}.$
If $\lambda\in \Omega(d)$, then $\widetilde{p}(t, \lambda)$ is well-defined on $\mathcal{I^{\prime}}.$
Further, if $y =(y_{1} , y_{2}  )^{\mathrm{T}}$ is a solution of \eqref{00} with  $\lambda\in \Omega(d)$, then by \eqref{004} and \eqref{0041},
we get
\begin{equation}\label{291}
 {y}^{[1]}(t)=  \widetilde{p}(t, \lambda)\Delta{y}_{1}(t)+ \alpha(t){y}_{1}(t+1), ~ t \in \mathcal{I^{\prime}},\vspace{-2mm}
\end{equation}
where $\alpha(t)$ is given by \vspace{-2mm}  $$\alpha(t)=\dfrac{  h(t) c(t) }{\lambda-d(t)},~t \in \mathcal{I^{\prime}}. $$

\begin{lemma}\label{constant}
   Let $ \varphi =\left(\varphi_1, \varphi_2\right)^{\mathrm{T}}$ and $\psi=\left(\psi_1, \psi_2\right)^{\mathrm{T}}$  be solutions of \eqref{00}. Then, for $\lambda\in \Omega(d)$ and $t\in \mathcal{I^{\prime}},$
\begin{equation}\label{022}
 [\psi(t,\lambda), \overline{\varphi}(t,\lambda)]=\widetilde{p}(t, \lambda) \Big( {\psi}_{1}(t+1,\lambda)\Delta{\varphi}_{1}(t,\lambda)- {\varphi}_{1}(t+1,\lambda) \Delta{\psi}_{1}(t,\lambda)\Big),
\end{equation}
and further,  $[\psi(t,\lambda), \overline{\varphi} (t,\lambda)]$ is a   constant on $\mathcal{I^{\prime}}$.
\end{lemma}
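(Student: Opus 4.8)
The plan is to first establish the displayed identity \eqref{022} by direct substitution into the definition of the Lagrange bracket, and then to deduce the constancy from the Green's-formula consequence recorded in Lemma \ref{consequence}.

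For the identity, I would expand \eqref{005} with $y=\psi$ and $z=\overline{\varphi}$. Since the coefficients $p,q,c,h,d$ are real-valued, we have $\overline{\overline{\varphi}}=\varphi$, so
\[
[\psi(t,\lambda),\overline{\varphi}(t,\lambda)]=\psi_1(t+1,\lambda)\,\varphi^{[1]}(t,\lambda)-\psi^{[1]}(t,\lambda)\,\varphi_1(t+1,\lambda).
\]
Because $\varphi$ and $\psi$ both solve \eqref{00} and $\lambda\in\Omega(d)$, formula \eqref{291} applies to each of them, giving $\varphi^{[1]}(t)=\widetilde{p}(t,\lambda)\Delta\varphi_1(t)+\alpha(t)\varphi_1(t+1)$ and the analogous expression for $\psi^{[1]}(t)$. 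Substituting these into the line above, the two terms carrying the factor $\alpha(t)$ are $\alpha(t)\psi_1(t+1)\varphi_1(t+1)$ with opposite signs and therefore cancel, leaving exactly the right-hand side of \eqref{022}.

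For the constancy statement I would invoke Lemma \ref{consequence}. Since the coefficients are real, $\overline{\varphi}(\cdot,\lambda)$ is a solution of $({1.1}_{\overline{\lambda}})$ whenever $\varphi(\cdot,\lambda)$ is a solution of $({1.1}_{\lambda})$. Applying \eqref{020} with the solution of $({1.1}_{\lambda})$ taken to be $\psi$ and the solution of $({1.1}_{\mu})$ taken to be $\overline{\varphi}$ with $\mu=\overline{\lambda}$, the scalar prefactor becomes $\lambda-\overline{\mu}=\lambda-\lambda=0$, so the left-hand side of \eqref{020} vanishes identically. Hence $[\psi(t,\lambda),\overline{\varphi}(t,\lambda)]\big|_{t=a-1}^{N}=0$ for every $N\in\mathcal{I}$, i.e.\ $[\psi(N,\lambda),\overline{\varphi}(N,\lambda)]=[\psi(a-1,\lambda),\overline{\varphi}(a-1,\lambda)]$ for all $N$, which is precisely the asserted constancy on $\mathcal{I^{\prime}}$.

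The argument is essentially mechanical; the only points that need care are the conjugation bookkeeping (using $\overline{\overline{\varphi}}=\varphi$ and the reality of the coefficients to identify $\overline{\varphi}(\cdot,\lambda)$ as a solution of $({1.1}_{\overline{\lambda}})$) and matching the two solutions to the roles in Lemma \ref{consequence} so that the factor $\lambda-\overline{\mu}$ is forced to zero. Alternatively, once \eqref{022} has been established, constancy can also be read off directly, since its right-hand side is the modified Wronskian of two solutions of the scalar equation \eqref{002}, which is constant; but routing through Lemma \ref{consequence} is shorter and uses only what has already been proved.
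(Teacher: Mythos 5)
Your proof is correct. The derivation of \eqref{022} is exactly the paper's route: apply \eqref{291} to both $\varphi$ and $\psi$ inside the bracket \eqref{005} and observe the cancellation of the $\alpha(t)$ terms. For the constancy you diverge from the paper: the paper substitutes \eqref{022} into a direct computation of $\nabla[\psi(t,\lambda),\overline{\varphi}(t,\lambda)]$, using the scalar equation \eqref{002} satisfied by $\varphi_1$ and $\psi_1$ to show that this backward difference vanishes on $\mathcal{I}$; you instead feed the pair $(\psi(\cdot,\lambda),\overline{\varphi}(\cdot,\lambda))$ into Lemma \ref{consequence} with $\mu=\overline{\lambda}$, so that the reality of the coefficients makes $\overline{\varphi}(\cdot,\lambda)$ a solution of $({1.1}_{\overline{\lambda}})$ and the prefactor $\lambda-\overline{\mu}$ vanishes, forcing $[\psi(t,\lambda),\overline{\varphi}(t,\lambda)]\big|_{t=a-1}^{N}=0$ for every $N\in\mathcal{I}$. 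Both arguments are ultimately instances of the Lagrange identity, but yours reuses an already proved lemma and needs no computation beyond the conjugation bookkeeping; it also does not use $\lambda\in\Omega(d)$ for the constancy step, which is only needed for \eqref{022} itself, whereas the paper's route goes through \eqref{002} and hence does use that hypothesis. The paper's direct computation, on the other hand, makes explicit that the bracket coincides with the (modified) Wronskian of two solutions of the scalar equation \eqref{002}, the observation you mention as the alternative at the end. Either way the statement is fully established.
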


\begin{proof}
Let $\lambda\in \Omega(d)$ and $ \varphi   =\left(\varphi_1 , \varphi_2 \right)^{\mathrm{T}}$ and $\psi =\left(\psi_1 , \psi_2 \right)^{\mathrm{T}}$  be solutions of \eqref{00}.
Then  \eqref{002}  and \eqref{291} hold for  $ \varphi$ and $\psi$, respectively.
Then,  \eqref{022}  can be  easily obtained by \eqref{291}.  Moreover, using \eqref{002} and \eqref{022}, one has
$$
\begin{aligned}
\nabla  [\psi(t,\lambda), \overline{\varphi} (t,\lambda) ] &=\psi_{1}(t,\lambda)\nabla \big( \widetilde{p} (t, \lambda)\Delta \varphi_{1}(t,\lambda)  \big)-\varphi_{1}(t,\lambda)\nabla \big( \widetilde{p}(t, \lambda)\Delta \psi_{1}(t,\lambda)  \big)\\
&=\psi_{1}(t,\lambda)(\widetilde{q}(t, \lambda)-\lambda)\varphi_{1}(t,\lambda)-\varphi_{1}(t,\lambda)(\widetilde{q}(t, \lambda)-\lambda)\psi_{1}(t,\lambda)=0,~t\in \mathcal{I},\vspace{-2mm}
\end{aligned}
$$
which implies that   $[\psi(t,\lambda), \overline{\varphi }(t,\lambda)]$ is a constant on $\mathcal{I^{\prime}}$. This completes the proof.
\end{proof}

For convenience, let\vspace{-2mm}
$$\mathcal{M}(t):=d(t)-\dfrac{ c^{2}(t)-h(t)c(t)}{p(t)}, \quad~t \in {\mathcal{I^{\prime}}},\quad {\Omega}^{\prime}(\mathcal{M},d):=\mathbb{C} \backslash ( \sigma(\mathcal{M})\cup \sigma(d)).$$
Note that  $\widetilde{p} (t, \lambda)=\dfrac{p(t)}{\lambda-d(t)}\big( \lambda- \mathcal{M}(t)\big).$ Then, $\widetilde{p}(t, \lambda)$ and  $\widetilde{p}^{-1}(t, \lambda)$ are well-defined for every $t\in {\mathcal{I^{\prime}}} $ in the case of  $\lambda \in {\Omega}^{\prime}(\mathcal{M},d).$
The existence and uniqueness of initial value problems  for \eqref{00} are given below.

\begin{theorem}\label{initial}
Let $\lambda \in {\Omega}^{\prime}(\mathcal{M},d)$ and   $c_{1}, c_{2} \in \mathbb{C}$. Then the initial value problem \vspace{-2mm}
\begin{equation}\label{2069}
 \mathcal{{L}}(y)(t)=\lambda  y(t), \ t\in \mathcal{I}^{\prime}, \ \quad y_{1}(a)=c_{1}, \quad y^{[1]}(a-1)=c_{2},
\end{equation}
has a unique solution $y=y(t, \lambda)$ on $ {\mathcal{I^{\prime}}} $.
\end{theorem}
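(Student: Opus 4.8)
The plan is to reduce the initial value problem \eqref{2069} to a first-order recursion and then solve it step by step, using the fact that $\widetilde p(t,\lambda)$ and $\widetilde p^{-1}(t,\lambda)$ are well-defined for $\lambda\in\Omega'(\mathcal M,d)$. First I would observe that since $\lambda\in\Omega'(\mathcal M,d)\subset\Omega(d)$, equation \eqref{00} is equivalent to the pair \eqref{002}, \eqref{004} together with the identity \eqref{291} for the quasi-difference. Concretely, a function $y=(y_1,y_2)^{\mathrm T}$ solves \eqref{2069} if and only if $y_1$ solves the scalar Sturm--Liouville difference equation \eqref{002} with the initial data $y_1(a)=c_1$ and $y^{[1]}(a-1)=c_2$ read through \eqref{291}, and then $y_2$ is defined on $\mathcal I'$ by \eqref{004}. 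So it suffices to prove existence and uniqueness for the scalar problem for $y_1$.

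Next I would rewrite \eqref{002} as an explicit two-step recursion. Expanding $-\nabla(\widetilde p(t,\lambda)\Delta y_1(t))+\widetilde q(t,\lambda)y_1(t)=\lambda y_1(t)$ for $t\in\mathcal I$ gives
\begin{equation*}
\widetilde p(t,\lambda)y_1(t+1)=\bigl(\widetilde p(t,\lambda)+\widetilde p(t-1,\lambda)+\widetilde q(t,\lambda)-\lambda\bigr)y_1(t)-\widetilde p(t-1,\lambda)y_1(t-1),\quad t\in\mathcal I,
\end{equation*}
and since $\widetilde p(t,\lambda)\neq 0$ we may divide and solve for $y_1(t+1)$ in terms of $y_1(t)$ and $y_1(t-1)$. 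Thus, given the two values $y_1(a-1)$ and $y_1(a)$, every later value $y_1(t)$, $t\ge a+1$, is uniquely determined; and the value $y_1(a-1)$ is itself recovered from the initial data because by \eqref{291} at $t=a-1$ we have $y^{[1]}(a-1)=\widetilde p(a-1,\lambda)\bigl(y_1(a)-y_1(a-1)\bigr)+\alpha(a-1)y_1(a)$, which, using $\widetilde p^{-1}(a-1,\lambda)$, can be solved uniquely for $y_1(a-1)$ in terms of $c_1=y_1(a)$ and $c_2=y^{[1]}(a-1)$. Running the recursion forward then produces a unique sequence $\{y_1(t)\}_{t=a-1}^{+\infty}$, and setting $y_2(t)$ via \eqref{004} and checking the $t=a-1$ row of $\mathcal L$ separately (which only involves the second component) completes the construction of a solution $y$ on $\mathcal I'$; uniqueness is immediate since every step was forced.

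The only genuinely delicate point is the bookkeeping at the left endpoint: the inhomogeneous first row of $\mathcal L$ at $t=a-1$ is the zero row, so the defining relation there is automatically compatible, and one must be careful that the two pieces of initial data $y_1(a)$ and $y^{[1]}(a-1)$ correspond bijectively to the two "natural" unknowns $y_1(a-1)$ and $y_1(a)$ of the scalar recursion — this is exactly where invertibility of $\widetilde p(a-1,\lambda)$, hence the hypothesis $\lambda\in\Omega'(\mathcal M,d)$ rather than merely $\lambda\in\Omega(d)$, is used. Everything else is a routine induction on $t$, and no summability or analytic estimate is needed since each $y(t)$ is obtained by finitely many algebraic operations.
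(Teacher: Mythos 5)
Your argument is correct, and it follows the same basic strategy as the paper — reduce \eqref{00} to the scalar reduced equation \eqref{002} together with \eqref{004}, recurse forward, and use $\lambda\in{\Omega}^{\prime}(\mathcal{M},d)$ to make $\widetilde p(t,\lambda)$ invertible — but the mechanics differ in a way worth noting. The paper does not iterate the three-term recursion in $y_1$ directly; it rewrites the problem as the first-order system \eqref{212} in the variables $\left(y_{1}(t+1), y^{[1]}(t)\right)^{\mathrm{T}}$, checks $\det\big(I_2-A(t,\lambda)\big)\equiv 1$ to get a unique fundamental matrix with $Y(a,\lambda)=I_2$, recovers $y_2$ from \eqref{2144}, and then determines the endpoint values $\big(y_1(a-1),y_2(a-1)\big)$ from the $2\times 2$ linear system \eqref{8310}, whose determinant is (up to sign) exactly $\widetilde p(a-1,\lambda)$. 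Your version is more elementary: you solve the scalar recursion obtained from \eqref{002} for $y_1(t+1)$, convert the data $\big(c_1,c_2\big)$ into $\big(y_1(a-1),y_1(a)\big)$ by inverting \eqref{291} at $t=a-1$, and define $y_2$ by \eqref{004}; this avoids the matrix $A(t,\lambda)$, the determinant computation, and the fundamental matrix altogether, at the cost of leaning explicitly on the equivalence of \eqref{00} with \eqref{002} and \eqref{004} (which the paper states and which is straightforward to verify). One small correction of emphasis: the hypothesis $\lambda\in{\Omega}^{\prime}(\mathcal{M},d)$ is not used \emph{only} in the endpoint bookkeeping, as your last paragraph suggests — you also divide by $\widetilde p(t,\lambda)$ at every step $t\in\mathcal{I}$ of the forward recursion, so the nonvanishing of $\widetilde p(\cdot,\lambda)$ on all of ${\mathcal{I^{\prime}}}$ is needed there as well; since you did invoke it at that point, this is a matter of wording rather than a gap.
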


\begin{proof}
Let $\lambda \in {\Omega}^{\prime}(\mathcal{M},d)$ and  $y=\left(y_1, y_2\right)^{\mathrm{T}}$  satisfy \eqref{2069}.
Then,   \eqref{002} and \eqref{291} hold. It follows from \eqref{291} that
\begin{equation}\label{29}
\Delta y_{1}(t) =-\frac{\alpha(t)}{ \widetilde{p}(t, \lambda)} y_{1}(t+1)+\frac{1}{\widetilde{p}(t, \lambda)} y^{[1]}(t), ~ t \in {\mathcal{I^{\prime}}}.
\end{equation}
Using \eqref{002},  \eqref{291},   \eqref{29}, and $-\Delta y_1(t)+y_{1}(t+1)=y_{1}(t ),$  one has
\begin{equation}\label{210}
\begin{aligned}
\nabla y^{[1]}(t) &=\nabla \left(\widetilde{p}(t, \lambda) \Delta y_{1}(t) \right) +  y_{1}(t) \nabla \alpha(t)+\alpha(t) \Delta{y}_{1}(t) \vspace{2mm}\\
&=H(t) y_{1}(t)+\alpha(t)\left(  -\frac{\alpha(t) }{ \widetilde{p}(t, \lambda)} y_{1}(t+1)+\frac{1}{\widetilde{p}(t, \lambda) } y^{[1]}(t)    \right)\vspace{2mm}\\
&=\left( \frac{\alpha(t)H(t)- \alpha^2(t) }{\widetilde{p}(t, \lambda) }  +H(t) \right) y_{1}(t+1) + {\frac{ \alpha(t) - H(t)  }{ \widetilde{p}(t, \lambda) }}     y^{[1]}(t),~ t \in \mathcal{I},
\end{aligned}
\end{equation}
where $$H(t):= \widetilde{q}(t)+\nabla \alpha(t) -\lambda=q(t)+\dfrac{   h^{2}(t) }{\lambda-d(t)}-\lambda, ~ t \in \mathcal{I}.$$

Now, let
$$A(t, \lambda)=\left(\begin{array}{ll}- \alpha(t)  \widetilde{p}^{-1}(t, \lambda)  & \widetilde{p}^{-1}(t, \lambda)\vspace{4mm} \\  (H(t)-{\alpha(t)} )\alpha(t)   \widetilde{p}^{-1}(t, \lambda) +H(t) & ( \alpha(t)- H(t) )\widetilde{p}^{-1}(t, \lambda)\end{array}\right).$$
By a simple calculation,   $\rm{det}$$(I_2-A(t,\lambda))\equiv 1 $ on  $  \mathcal{I}$, where $I_2\in {\mathbb{C}}^{2\times2}$  is the  identity matrix.
Note that $\Delta{y}_{1}(t)= \nabla{y}_{1}(t+1).$
Then,   by \eqref{29} and \eqref{210},  one has the first-order system of difference equations
\begin{equation}\label{212}
\nabla \left(\begin{array}{c}
 y_{1}(t+1) \\
  y^{[1]}(t)
\end{array}\right) =A(t, \lambda)\left(\begin{array}{c}
y_{1}(t+1) \\
y^{[1]}(t)
\end{array}\right), ~t\in  \mathcal{I}.
\end{equation}
On the other hand,   using    \eqref{29} and $-\Delta y_1(t)+y_{1}(t+1)=y_{1}(t )$ again, we get from    \eqref{004} that for $t \in {\mathcal{I^{\prime}}}$,
\begin{equation}\label{2144}
 \begin{aligned}
y_2(t)= \left[\frac{\alpha(t) (h(t)- c(t))}{(\lambda-d(t))\widetilde{p}(t, \lambda)}
+ \frac{ h(t)}{\lambda-d(t)} \right]y_{1}(t+1)
 + \frac{c(t)-h(t)}{(\lambda-d(t))\widetilde{p}(t, \lambda)}y^{[1]}(t).
\end{aligned}
\end{equation}
It can  be  easily verified that  $y=\left(y_1, y_2\right)^{\mathrm{T}}$ satisfies  \eqref{00}  if and only if \eqref{212} holds on $\mathcal{ I}$ and  \eqref{2144} holds  on  ${\mathcal{I^{\prime}}}  $.

Since   $\rm{det}$$(I_2-A(t,\lambda))\equiv 1 $ on  $  \mathcal{I}$,
the fundamental matrix $Y(t, \lambda)$ of \eqref{212}   with $Y(a, \lambda)=I_{2}$   exists    uniquely on  $  \mathcal{I}$. Next, for  $c_{1}, c_{2} \in \mathbb{C}$,    let $y_1(a)=c_1$ and $y^{[1]}(a-1)=c_2$,
  $y_1(t)$ for $t \in {\mathcal{I}}\setminus\{a\}$ be determined by $\left(y_{1}(t+1), y^{[1]}(t)\right)^{\mathrm{T}}=Y(t, \lambda)\left(c_{1}, c_{2}\right)^{\mathrm{T}},$ $t \in \mathcal{I}$,
and    $y_2 $  satisfy \eqref{2144} on $\mathcal{I}$. Then  $y=\left(y_1, y_2\right)^{\mathrm{T}}$ is given on $\mathcal{I}$.
In addition, it follows from    \eqref{004} and \eqref{0041} that
\begin{equation} \label{8310}
\left\{\begin{array}{l}
\dfrac{h(a-1)-c(a-1)}{\lambda-d(a-1)}y_1(a-1)-y_2(a-1) =\dfrac{-c(a-1)}{\lambda-d(a-1)}c_1, \vspace{2mm}   \\
-p(a-1)y_1(a-1)+c(a-1) y_2(a-1)  =c_2-p(a-1)c_1.
\end{array}\right.\vspace{-1mm}
\end{equation}
Since the coefficient determinant of \eqref{8310} is   not zero by $\lambda \in {\Omega}^{\prime}(\mathcal{M},d),$ \eqref{8310} has a unique  solution $(y_1(a-1),y_2(a-1))^{\mathrm{T}}$,
with which  $y=\left(y_1, y_2\right)^{\mathrm{T}}$ is  the unique solution satisfying \eqref{2069}.
This completes the proof.
\end{proof}

For $\lambda \in {\Omega}^{\prime}(\mathcal{M},d),$  it follows from Theorem \ref{initial} that
mapping $y \mapsto\left(y_{1}(a), y^{[1]}(a-1)\right)^{\mathrm{T}} \in \mathbb{C}^{2}$ is bijective for solutions  $y$ of equation \eqref{00}. Hence, we have
\begin{corollary}\label{dimension}
For $\lambda \in {\Omega}^{\prime}(\mathcal{M},d)$,   the set of solutions   of equation \eqref{00} is a vector space of dimension $2$.
\end{corollary}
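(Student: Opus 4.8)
The plan is to deduce Corollary \ref{dimension} directly from Theorem \ref{initial}, exactly as the sentence preceding the statement suggests. First I would fix $\lambda\in{\Omega}^{\prime}(\mathcal{M},d)$ and observe that the set $\mathcal{S}_\lambda$ of all solutions $y=(y_1,y_2)^{\mathrm{T}}\in l(\mathcal{I})$ of \eqref{00} is a linear subspace of $l(\mathcal{I})$: this is immediate because $\mathcal{L}$ acts linearly and the equation $\mathcal{L}(y)=\lambda y$ is homogeneous, so any linear combination of solutions is again a solution. The substantive content is then the computation of $\dim\mathcal{S}_\lambda$.

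To that end I would introduce the evaluation map $E:\mathcal{S}_\lambda\to\mathbb{C}^2$, $E(y):=\big(y_1(a),\,y^{[1]}(a-1)\big)^{\mathrm{T}}$, where $y^{[1]}$ is the quasi-difference from \eqref{0041}, and show that $E$ is a linear isomorphism. Linearity of $E$ is clear from the linearity of the maps $y\mapsto y_1(a)$ and $y\mapsto y^{[1]}(a-1)=p(a-1)\Delta y_1(a-1)+c(a-1)y_2(a-1)$. Surjectivity is precisely the existence assertion of Theorem \ref{initial}: given $(c_1,c_2)^{\mathrm{T}}\in\mathbb{C}^2$, that theorem produces a solution $y$ of \eqref{00} with $y_1(a)=c_1$ and $y^{[1]}(a-1)=c_2$, i.e.\ with $E(y)=(c_1,c_2)^{\mathrm{T}}$. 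Injectivity is the uniqueness assertion: if $E(y)=0$, then $y$ solves the initial value problem \eqref{2069} with $c_1=c_2=0$, whose only solution is the zero sequence, so $y=0$; hence $\ker E=\{0\}$. Since $E$ is a linear bijection between $\mathcal{S}_\lambda$ and $\mathbb{C}^2$, we conclude $\dim\mathcal{S}_\lambda=\dim\mathbb{C}^2=2$, which is the claim.

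There is essentially no obstacle here: the corollary is a formal consequence of Theorem \ref{initial}, and the only point requiring a word of care is that the initial data in \eqref{2069} are imposed at the two \emph{distinct} locations $t=a$ (for $y_1$) and $t=a-1$ (for $y^{[1]}$), so one should make clear that $E$ is genuinely well defined on $\mathcal{S}_\lambda$ and that Theorem \ref{initial} covers exactly this pair of conditions — which it does. I would therefore keep the write-up to a few lines: state the linearity of $\mathcal{S}_\lambda$, define $E$, cite Theorem \ref{initial} for bijectivity, and read off the dimension.
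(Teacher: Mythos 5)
Your proposal is correct and is essentially the paper's own argument: the paper likewise deduces the corollary by noting that Theorem \ref{initial} makes the map $y \mapsto (y_1(a), y^{[1]}(a-1))^{\mathrm{T}} \in \mathbb{C}^2$ a bijection on solutions, hence the solution space has dimension $2$. Your only addition is the (routine but welcome) explicit remark that the solution set is a linear subspace and that the evaluation map is linear.
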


\section{{\large{\bf  Classification   of  singular matrix difference   equations of mixed order}}}{}\baselineskip 20pt
\ \ \  \
It has been known that singular Sturm-Liouville differential and difference  equations can be classified  into the
limit point case  and the limit circle case, respectively,  by the Weyl's method, i.e.,  in terms of geometrical properties of the limiting set of a sequence of nested Weyl's circles \cite{Weyl}.
This work has been developed intensively, and especially, Hassi et al \cite{Hassi} founded that matrix differential equations \eqref{2222} can also be classified with a similar method
when the coefficients are real-valued.
Motivated by the work given by \cite{Hassi}, we shall give the   classification for equation $(1.1_\lambda)$ or $\mathcal{L}$ by constructing  nested Weyl's circles.
This section consists of two subsections.

\subsection{\normalsize\bf  Weyl's circles and limit point and limit circle  classification}
\ \ \ \
In this subsection, we shall construct Weyl's circles for equations $(1.1_\lambda)$ or $\mathcal{L}$ over finite intervals,
which are nested and converge to a limiting set. Equations $(1.1_\lambda)$ or $\mathcal{L}$ will be classified in terms of properties of the limiting set.
We present it in detail for the convenience of the reader.

First, we consider  $\mathcal{L}$ on the finite interval
${\mathcal{I^{\prime}}}_N:=\{t\}_{t=a-1}^{N+1}, ~N \in \mathcal{I}\setminus \{a\}$,
satisfying the following  boundary conditions:\vspace{-1mm}
\begin{equation}\label{31001}
\left\{\begin{array}{l}
U_1(y):=y_1(a) \sin \alpha- y^{[1]}(a-1) \cos \alpha=0 , \vspace{2mm}\\
U_2(y):=y_1(N+1) \cos \beta+y^{[1]}(N )  \sin \beta=0,  \\
   \end{array}\right.\vspace{-1mm}
\end{equation}
where $ 0 \leqslant \alpha, \beta<\pi.$
The boundary value problem  \eqref{00} with \eqref{31001} is
called a regular one since it defined on a finite interval.
By Theorem 2.1, for $\lambda \in {\Omega}^{\prime}(\mathcal{M},d)$, let  $\varphi(t, \lambda)=\left(\varphi_{1}(t, \lambda), \varphi_{2}(t, \lambda)\right)^{\mathrm{T}}$ and $\psi(t, \lambda)=\left(\psi_{1}(t, \lambda), \psi_{2}(t, \lambda)\right)^{\mathrm{T}}$  be solutions of \eqref{00}
with  the initial conditions:
\begin{equation}\label{case}
\begin{aligned}
 \varphi_{1}(a, \lambda)=\sin \alpha, ~& \varphi^{[1]}(a-1, \lambda)=-\cos \alpha;\\
\psi_{1}(a, \lambda)=\cos \alpha,~ & \psi^{[1]}(a-1, \lambda)=\sin \alpha.
\end{aligned}
\end{equation}
From   Lemma \ref{constant} and    \eqref{case}, we can get that   $\widetilde{\varphi} (t,\lambda):=(\varphi_{1}(t+1,\lambda), \varphi^{[1]}(t,\lambda) ) ^{\mathrm{T}}$ and  $\widetilde{\psi} (t,\lambda):=(\psi_{1}(t+1,\lambda), \psi^{[1]}(t,\lambda))^{\mathrm{T}}$  are linearly independent on  $ {{\mathcal{I}}^{\prime}}_N .$
Then, we claim that   $\varphi(t,\lambda)$ and  $\psi(t,\lambda)$  are   linearly independent on ${{\mathcal{I}}^{\prime}}_N.$
In fact, suppose on the contrary that  $\varphi(t,\lambda)$ and  $\psi(t,\lambda)$ are  linearly  dependent, i.e.,     $\varphi(t,\lambda)=k\psi(t,\lambda), t\in {{\mathcal{I}}^{\prime}}_N $,  for some $k.$
Then
$ \varphi^{[1]}(t,\lambda)=k \psi^{[1]}(t,\lambda) $ by \eqref{0041}, which yields that
 $\widetilde{\varphi} (t,\lambda)$  and $\widetilde{\psi} (t,\lambda)$ are linearly  dependent on ${{\mathcal{I}}^{\prime}}_N,$  which is a
contradiction. Hence,  $\varphi(t,\lambda)$ and  $\psi(t,\lambda)$ are  linearly  independent on ${{\mathcal{I}}^{\prime}}_N.$
Furthermore, the following result holds:

\begin{lemma}\label{311311}
A number  $\lambda \in {\Omega}^{\prime}(\mathcal{M},d)$ is an eigenvalue of  boundary value problem   \eqref{00} with   \eqref{31001} if and
only if $ U_2(\psi)=0 .$
\end{lemma}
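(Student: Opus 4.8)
The plan is to reduce the eigenvalue condition to a single scalar equation by expanding an arbitrary solution in the basis $\{\varphi(\cdot,\lambda),\psi(\cdot,\lambda)\}$. By Corollary \ref{dimension}, for $\lambda\in{\Omega}^{\prime}(\mathcal{M},d)$ the solution space of \eqref{00} has dimension two, and $\varphi(\cdot,\lambda)$ and $\psi(\cdot,\lambda)$ were just shown to be linearly independent on ${\mathcal{I}^{\prime}}_N$; hence they form a basis of that space, so every solution $y$ of \eqref{00} on ${\mathcal{I}^{\prime}}_N$ can be written uniquely as $y=c_1\varphi+c_2\psi$ with $c_1,c_2\in\mathbb{C}$, and $y$ is nontrivial precisely when $(c_1,c_2)\neq(0,0)$.

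First I would evaluate the boundary forms $U_1$ and $U_2$ on this basis. Using the initial conditions \eqref{case} and the definition \eqref{0041} of the quasi-difference, a direct computation gives $U_1(\varphi)=\varphi_1(a,\lambda)\sin\alpha-\varphi^{[1]}(a-1,\lambda)\cos\alpha=\sin^2\alpha+\cos^2\alpha=1$, while $U_1(\psi)=\cos\alpha\sin\alpha-\sin\alpha\cos\alpha=0$. Since $U_1$ and $U_2$ are linear in $y$, for $y=c_1\varphi+c_2\psi$ we obtain $U_1(y)=c_1$ and $U_2(y)=c_1U_2(\varphi)+c_2U_2(\psi)$.

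For necessity, suppose $\lambda$ is an eigenvalue of \eqref{00} with \eqref{31001}, so some nontrivial $y=c_1\varphi+c_2\psi$ satisfies $U_1(y)=U_2(y)=0$; from $U_1(y)=c_1=0$ we get $y=c_2\psi$ with $c_2\neq0$, and then $0=U_2(y)=c_2U_2(\psi)$ forces $U_2(\psi)=0$. For sufficiency, if $U_2(\psi)=0$, then $\psi$ is a solution of \eqref{00} satisfying both boundary conditions in \eqref{31001} (recall $U_1(\psi)=0$), and $\psi\not\equiv0$ because $\psi_1(a,\lambda)=\cos\alpha$ and $\psi^{[1]}(a-1,\lambda)=\sin\alpha$ cannot vanish simultaneously; hence $\lambda$ is an eigenvalue.

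I do not anticipate a genuine obstacle here: the argument is the standard characterization of eigenvalues of a regular two-point boundary value problem. The only points needing care are invoking the dimension-two statement of Corollary \ref{dimension}, which is where the hypothesis $\lambda\in{\Omega}^{\prime}(\mathcal{M},d)$ is used, and keeping track of the signs in the evaluation of $U_1(\varphi)$ and $U_1(\psi)$ from \eqref{case}.
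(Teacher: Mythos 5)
Your proposal is correct and follows essentially the same route as the paper: expand an eigenvector as $c_1\varphi+c_2\psi$, use \eqref{case} to get $c_1=0$ from $U_1(y)=0$ and then $c_2U_2(\psi)=0$ for the necessity, and use $U_1(\psi)=0$ with the nontriviality of $\psi$ for the sufficiency. The only difference is cosmetic: you make explicit the values $U_1(\varphi)=1$, $U_1(\psi)=0$ and the nonvanishing of $(\cos\alpha,\sin\alpha)$, which the paper leaves implicit.
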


\begin{proof}  It is evident that $U_1(\psi)=0$ for $\lambda \in {\Omega}^{\prime}(\mathcal{M},d)$ by \eqref{case}.
Therefore, in the case that $U_2(\psi)=0$, this $\lambda$ is an eigenvalue of \eqref{00} with   \eqref{31001} and $\psi$ is the associated eigenvector.
Hence, the sufficiency is proved.

Now, we show the necessity.
Suppose that   $\lambda \in {\Omega}^{\prime}(\mathcal{M},d)$ is an eigenvalue of    \eqref{00} with   \eqref{31001} and   $y= \left(y_{1}, y_2 \right)^{\mathrm{T}}$ be the associated  eigenvector.
Then  $y$  can be expressed as $y(t,\lambda)=c_1\varphi(t,\lambda)+c_2\psi(t,\lambda)$, $t \in {{\mathcal{I}}^{\prime}} $, $c_1,c_2\in \mathbb{C}$, since it is a solution of \eqref{00}.
Inserting this expression of $y$ into $U_1(y)=0$ and using \eqref{case}, we get that $c_1=0$.
Furthermore, inserting this expression of $y$  with $c_1=0$ into $U_2(y)=0$ and using \eqref{case} again, we have
$c_2 U_2(\psi)=0.$
Since $y$ is a nontrivial solution of  \eqref{00}, we have $ c_2\neq0.$ Hence,  it follows from the above relation that  $ U_2(\psi)=0.$
This completes the proof.
\end{proof}

By   iterative calculations,
 $\psi_{1}(N+1,\lambda )$ and $\psi^{[1]}(N+1,\lambda )$ with $\lambda \in {\Omega}^{\prime}(\mathcal{M},d)$ are rational fraction polynomials  of  $\lambda$ with finite  terms, respectively.
Therefore, from Lemma  \ref{311311} and definition of $ U_2(\psi)$,
the boundary value problem   \eqref{00} with   \eqref{31001} has  eigenvalues  and the number of all the eigenvalues is finite.
In addition, for $y =(y_{1} , y_{2}  )^{\mathrm{T}}$ and $z =(z_{1} , z_{2}  )^{\mathrm{T}}  \in l ({\mathcal{I}}_N)$  satisfying \eqref{31001},
 it can be verified that \vspace{-2mm} $$ [y(a-1), z(a-1)]=[y(N), z(N)].\vspace{-2mm}$$
 Then,  it follows  that
$\langle \mathcal{{L}}(y), z\rangle_{N}= \langle y, \mathcal{{L}}(z)\rangle_{N} $ by Lemma \ref{21},
which implies that boundary value problem \eqref{00} with \eqref{31001} is symmetric in the space $l^2(\mathcal{I}_N)$.
Hence, all  eigenvalues of boundary value problem   \eqref{00} with   \eqref{31001} are   real numbers.

For simplicity,  for $\lambda \in {\Omega}^{\prime}(\mathcal{M},d),$ let
$$  A=\varphi_{1}(N+1, \lambda), ~ B=\varphi^{[1]}(N, \lambda), ~ C=\psi_{1}(N+1, \lambda), ~D=\psi^{[1]}(N, \lambda) .$$
Then, it is evident that
\begin{equation}\label{34}
A\bar{D}-B \bar{C} =[\varphi(N,\lambda),\psi(N,\lambda)],\ C\bar{D}-\bar{C}D=[\psi(N,\lambda),\psi(N,\lambda)].
\end{equation}
By \eqref{34} and Lemma 2.2, we have\vspace{-2mm}
\begin{equation}\label{1351352}C\bar{D}-\bar{C}D =2i{\rm Im \lambda}\sum_{t \in \mathcal{I}_N}    {\psi^*(t, \lambda)}\psi(t, \lambda),\ i=\sqrt{-1}.\vspace{-2mm}\end{equation}
 In addition,  by Lemma \ref{constant} and   \eqref{case}, we have\vspace{-2mm}
\begin{equation}\label{344}
 A D-B C=[\varphi(N, \lambda), \overline{ \psi} (N, \lambda)]=1.\vspace{-2mm}
\end{equation}

Now, let $\lambda\in \mathbb{C}\setminus \mathbb{R}$.
Then   $\lambda \in {\Omega}^{\prime}(\mathcal{M},d) $ since $\mathcal{M}$ and $d$ are real-valued.
Hence,   $\lambda$ is not an eigenvalue of \eqref{00} with   \eqref{31001} since ${\rm Im}\lambda\neq0$.
Therefore, $U_2(\psi)\neq 0$ by Lemma 3.1 since $U_1(\psi)=0$.
Next, let $\chi(t,\lambda )=\left(\chi_1(t,\lambda ), \chi_2(t,\lambda )\right)^{\mathrm{T} }$   be given by\vspace{-2mm}
\begin{equation}\label{03707}
\chi(t, \lambda ):=\varphi (t, \lambda)+m(\lambda) \psi (t, \lambda),\ t\in{{\mathcal{I}}^{\prime}}.  \vspace{-2mm}
\end{equation}
Then $\chi(t,\lambda )$ is a   solution   of  equation \eqref{00}.  Let  $\chi(t,\lambda )$ satisfy  the boundary condition  $U_2(\chi(\cdot,\lambda ))=0.$
 Then we  get\vspace{-2mm}
\begin{equation}\label{32}
m(\lambda)=-\dfrac{U_2(\varphi)}{U_2(\psi)} =-\dfrac{A z+B}{C z+D},\vspace{-2mm}
\end{equation}
where $z=\cot \beta$, $ 0 \leqslant \beta<\pi.$
It is noted that \eqref{32} describes a circle, denoted by $C_N$, in the complex plane as $z $ varies.   We shall give the characteristics of the  circle $C_N$ below.

\begin{theorem}\label{mainlemma}
The center $O_{N}$ and radius $r_{N}$ of circle $C_N$ are respectively given by
\begin{equation}\label{9312}
O_{N}=-\frac{[\varphi(N, \lambda), \psi(N, \lambda)]}{[\psi(N, \lambda), \psi(N, \lambda)]}, \quad r_{N}=\frac{1}{\big|[\psi(N, \lambda), \psi(N, \lambda)] \big|},
\end{equation}
and further,  the equation and   interior  of circle  $C_N$ are respectively  given by
\begin{equation}\label{310}
\sum_{t \in \mathcal{I}_N} \chi^*(t, \lambda ) \chi(t, \lambda ) =\frac{\operatorname{Im} m }{\operatorname{Im} \lambda},
\quad \sum_{t \in \mathcal{I}_N} \chi^*(t, \lambda ) \chi(t, \lambda )<\frac{\operatorname{Im} m }{\operatorname{Im} \lambda}.
\end{equation}

\end{theorem}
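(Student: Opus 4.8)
The plan is to funnel all four assertions through one quantity: the Lagrange bracket $[\chi(N,\lambda),\chi(N,\lambda)]$ of the solution $\chi(\cdot,\lambda):=\varphi(\cdot,\lambda)+m\,\psi(\cdot,\lambda)$ with itself, computed in two ways — at the right endpoint $N$ via the boundary condition $U_2(\chi(\cdot,\lambda))=0$, and at the left endpoint $a-1$ via the Green's formula of Lemma~\ref{consequence}, where the normalization \eqref{case} makes everything explicit. Throughout $\lambda\in\mathbb{C}\setminus\mathbb{R}$ and I use $A,B,C,D$ and $z=\cot\beta$ as already introduced. First I would observe that $m\in C_N$ iff $[\chi(N,\lambda),\chi(N,\lambda)]=0$: by definition (cf.\ \eqref{32}) $C_N$ is the set of $m=-(Az+B)/(Cz+D)$ with $z\in\mathbb{R}\cup\{\infty\}$, i.e.\ of those $m$ for which $\chi=\varphi+m\psi$ satisfies $U_2(\chi(\cdot,\lambda))=0$; since $\chi_1(N+1)=mC+A$ and $\chi^{[1]}(N)=mD+B$, this amounts to $\chi_1(N+1)=0$ (the value $z=\infty$) or $\chi^{[1]}(N)=-z\,\chi_1(N+1)$ with $z$ real, and inserting this into \eqref{005} gives $[\chi(N,\lambda),\chi(N,\lambda)]=(z-\bar z)\,|\chi_1(N+1)|^2=0$ in both cases; conversely, vanishing of this bracket forces $\chi^{[1]}(N)/\chi_1(N+1)\in\mathbb{R}$ (or $\chi_1(N+1)=0$), hence $m\in C_N$. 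A short check from \eqref{case} shows $\psi\not\equiv0$ on $\mathcal{I}_N$, so $\|\psi\|_N^2>0$, $C\neq0$, and $C_N$ is a genuine circle.

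Next I would apply Lemma~\ref{consequence} with $\varphi=\psi=\chi$ and $\mu=\lambda$ over $\mathcal{I}_N$, obtaining $2i\,(\operatorname{Im}\lambda)\sum_{t\in\mathcal{I}_N}\chi^*(t,\lambda)\chi(t,\lambda)=[\chi(t,\lambda),\chi(t,\lambda)]\big|_{t=a-1}^{N}$. Evaluating the left-endpoint term with \eqref{case} (this is where the reality of $\alpha$ is used) gives $[\chi(a-1,\lambda),\chi(a-1,\lambda)]=\bar m-m=-2i\,\operatorname{Im}m$, so that $[\chi(N,\lambda),\chi(N,\lambda)]=2i\,(\operatorname{Im}\lambda)\big(\sum_{t\in\mathcal{I}_N}\chi^*\chi-\operatorname{Im}m/\operatorname{Im}\lambda\big)$. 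Together with the previous step this gives $m\in C_N\iff\sum_{t\in\mathcal{I}_N}\chi^*(t,\lambda)\chi(t,\lambda)=\operatorname{Im}m/\operatorname{Im}\lambda$, the first relation in \eqref{310}. For the interior, expand $\chi=\varphi+m\psi$: the function $m\mapsto\sum_{t\in\mathcal{I}_N}\chi^*\chi-\operatorname{Im}m/\operatorname{Im}\lambda=\|\varphi\|_N^2+2\operatorname{Re}\!\big(\bar m\,\langle\varphi,\psi\rangle_N\big)+|m|^2\|\psi\|_N^2-\operatorname{Im}m/\operatorname{Im}\lambda$ is a real quadratic in $m$ with positive leading coefficient $\|\psi\|_N^2$; it tends to $+\infty$ as $|m|\to\infty$ and vanishes exactly on $C_N$, hence is negative precisely on the open disk bounded by $C_N$, which is the second relation in \eqref{310}.

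For the centre and radius I would complete the square in that same quadratic, which produces a circle whose centre and radius are expressed through $\|\varphi\|_N^2$, $\langle\varphi,\psi\rangle_N$ and $\|\psi\|_N^2$, and then convert these back into brackets at $N$ by applying Lemma~\ref{consequence} once more to the pairs $(\varphi,\psi)$, $(\psi,\psi)$, $(\varphi,\varphi)$ with $\mu=\lambda$. Since \eqref{case} yields $[\varphi(a-1),\psi(a-1)]=1$ and $[\psi(a-1),\psi(a-1)]=[\varphi(a-1),\varphi(a-1)]=0$, one gets $\langle\varphi,\psi\rangle_N=\big([\varphi(N,\lambda),\psi(N,\lambda)]-1\big)/(2i\operatorname{Im}\lambda)$, $\|\psi\|_N^2=[\psi(N,\lambda),\psi(N,\lambda)]/(2i\operatorname{Im}\lambda)$ (this recovers \eqref{1351352}) and $\|\varphi\|_N^2=[\varphi(N,\lambda),\varphi(N,\lambda)]/(2i\operatorname{Im}\lambda)$. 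After substitution the constant $1$ cancels and the centre collapses to $O_N=-[\varphi(N,\lambda),\psi(N,\lambda)]/[\psi(N,\lambda),\psi(N,\lambda)]$, while for the radius one is left with $r_N^2=\big(|[\varphi(N,\lambda),\psi(N,\lambda)]|^2+[\varphi(N,\lambda),\varphi(N,\lambda)]\,[\psi(N,\lambda),\psi(N,\lambda)]\big)\big/\big|[\psi(N,\lambda),\psi(N,\lambda)]\big|^2$. The numerator I would handle by the Lagrange identity for the four numbers $A=\varphi_1(N+1,\lambda)$, $B=\varphi^{[1]}(N,\lambda)$, $C=\psi_1(N+1,\lambda)$, $D=\psi^{[1]}(N,\lambda)$: using that $[\varphi(N),\varphi(N)]$ and $[\psi(N),\psi(N)]$ are purely imaginary (since $[z,y]=-\overline{[y,z]}$ by \eqref{005}), it reads $|[\varphi(N),\psi(N)]|^2+[\varphi(N),\varphi(N)]\,[\psi(N),\psi(N)]=|AD-BC|^2=\big|[\varphi(N,\lambda),\overline{\psi}(N,\lambda)]\big|^2$, and this equals $1$ by \eqref{344}. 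Hence $r_N=1/\big|[\psi(N,\lambda),\psi(N,\lambda)]\big|$, which finishes \eqref{9312} together with \eqref{310}.

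I expect the substance to be routine; the points demanding care are: (i) tracking the conjugations in the sesquilinear bracket \eqref{005} — linear in the first slot, conjugate-linear in the second, $[z,y]=-\overline{[y,z]}$ — so that $[\varphi,\varphi]$ and $[\psi,\psi]$ come out purely imaginary and the ``radius squared'' comes out real and positive; (ii) the Lagrange/Pl\"ucker identity that collapses the radius; and (iii) the non-degeneracy $\|\psi\|_N^2>0$, equivalently $[\psi(N,\lambda),\psi(N,\lambda)]\neq0$, which guarantees that $C_N$ is a circle rather than a point or a line.
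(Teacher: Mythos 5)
Your proposal is correct and follows essentially the same route as the paper: both rest on the equivalence $U_2(\chi(\cdot,\lambda))=0\iff[\chi(N,\lambda),\chi(N,\lambda)]=0$, the Green's formula of Lemma~\ref{consequence} together with the initial data \eqref{case} (giving $[\chi(a-1,\lambda),\chi(a-1,\lambda)]=-2i\operatorname{Im}m$ and \eqref{1351352}), the relation $AD-BC=1$ from \eqref{344}, and the same $2\times2$ Lagrange identity that collapses the radius to $1/\big|[\psi(N,\lambda),\psi(N,\lambda)]\big|$. The only difference is bookkeeping: you complete the square in the quadratic with coefficients $\|\varphi\|_N^2$, $\langle\varphi,\psi\rangle_N$, $\|\psi\|_N^2$ and then convert to endpoint brackets, while the paper works directly with the quadratic in $m$ built from $A,B,C,D$ at $t=N$ and reads off $O_N$ and $r_N$ from it.
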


\begin{proof}
Let $\chi(t,\lambda ) $ be defined by \eqref{03707}
satisfying  $U_2(\chi(\cdot,\lambda ))=0$. Then, by the fact that $\cos\beta$ and $\sin\beta$ are real numbers, it can be concluded that
$\chi_{1}(N+1, \lambda ) \overline{\chi}^{[1]}(N, \lambda)$ is a real number.
Since it holds that \vspace{-2mm}$$[\chi(N, \lambda ), \chi(N, \lambda )]=2 i \operatorname{Im} \left\{\chi_{1}(N+1, \lambda ) \overline{\chi}^{[1]}(N, \lambda)\right\},\vspace{-2mm}$$
we have\vspace{-2mm}
\begin{equation}\label{38}
[\chi(N, \lambda ), \chi(N, \lambda )]=0.
\end{equation}
Conversely, if \eqref{38} holds, then $\chi_{1}(N+1, \lambda ) \overline{\chi}^{[1]}(N, \lambda) $ is a real number.
Hence, from \eqref{38} and the fact that $\chi_{1}(N+1, \lambda ) \overline{\chi}^{[1]}(N, \lambda)$ is a real number, it can be verified that there exists $\beta\in [0,\pi)$ such that $U_2(\chi(\cdot,\lambda ))=0$.

Note that $C\bar{D}-\bar{C}D \neq 0 $ by \eqref{1351352}. Then, by using \eqref{34} and the definition of $\chi(t, \lambda )$,  we have
\begin{equation}\label{1351351}
\frac{[\chi(N, \lambda ), \chi(N, \lambda )]}{{C \bar{D}-\bar{C} D}}=m \bar{m}-\frac{\bar{A} D-\bar{B} C}{C \bar{D}-\bar{C} D} m+\frac{A \bar{D}-B \bar{C}}{C \bar{D}-\bar{C} D}\bar{m}-\frac{ \bar{A}B  - \bar{B}A}{C \bar{D}-\bar{C} D}.
\end{equation}
As a result,  we get from \eqref{38} that
\begin{equation}\label{135135}
m \bar{m}-\frac{\bar{A} D-\bar{B} C}{C \bar{D}-\bar{C} D} m+\frac{A \bar{D}-B \bar{C}}{C \bar{D}-\bar{C} D}\bar{m}=\frac{ \bar{A}B  - \bar{B}A}{C \bar{D}-\bar{C} D}.
\end{equation}
Equation \eqref{135135} gives a clear expression of circle $C_{N}$. From  \eqref{135135} and \eqref{34}, the center $O_{N}$   of  circle $C_N$ is given by
$$
O_{N}=\frac{B \bar{C}- A \bar{D}}{C\bar{D}  -  \bar{C} D}=-\frac{[\varphi(N, \lambda), \psi(N, \lambda)]}{[\psi(N, \lambda), \psi(N, \lambda)]}.
$$
Further, it follows from  \eqref{344} and   \eqref{135135}  that
\begin{align*}
{r_{N}}^{2}&={|O_{N}|}^{2}+\frac{ \bar{A}B  - \bar{B}A}{C \bar{D}-\bar{C} D}
 ={\left|\frac{ A D-B C }{C \bar{D}- \bar{C} D }\right|}^{2}= \frac{1}{{|[\psi(N, \lambda), \psi(N, \lambda)]|}^{2}}.
\end{align*}
This completes the proof of \eqref{9312}.

In addition, it is easy to verify that\vspace{-2mm}
\begin{equation*}
[\chi(a-1, \lambda ), \chi(a-1, \lambda)] =- 2i \operatorname{Im} m.  \vspace{-2mm}
\end{equation*}
Thus,  it follows from Lemma \ref{consequence}  that
$$\vspace{-2mm}
\begin{aligned}
{\left[\chi(N, \lambda ), \chi(N, \lambda )\right] } &=2 i \operatorname{Im} \lambda \sum_{t \in \mathcal{I}_N}  \chi^*(t, \lambda ) \chi(t, \lambda ) +[\chi(a-1, \lambda ), \chi(a-1, \lambda)] \\
&=2 i  \Big( \operatorname{Im} \lambda \sum_{t \in \mathcal{I}_N}  \chi^*(t, \lambda ) \chi(t, \lambda ) - \operatorname{Im} m  \Big),
\end{aligned}
$$
which, together with \eqref{1351352},  \eqref{38}, and \eqref{1351351}, implies that  $m $ is on the circle $C_{N}$ if and only if
the first  formula of \eqref{310} holds, and
 $m$ is inside the circle $C_{N}$ if and only if
the second  formula of \eqref{310} holds.
This completes the proof.
\end{proof}

\begin{corollary}\label{inside}
 If $N<N^{\prime} $, then $C_{N^{\prime }}$ is inside the circle $C_{N}$.
\end{corollary}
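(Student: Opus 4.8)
\textbf{Proof proposal for Corollary \ref{inside}.}

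The plan is to read the nesting off directly from the description of the disk bounded by $C_N$ obtained in Theorem \ref{mainlemma}. I would fix $\lambda\in\mathbb{C}\setminus\mathbb{R}$ and, for an arbitrary $m\in\mathbb{C}$, put $\chi(t,\lambda)=\varphi(t,\lambda)+m\psi(t,\lambda)$, which is a solution of \eqref{00} on $\mathcal{I}'$ for every $m$. The first step is to observe that the endpoint identity $[\chi(a-1,\lambda),\chi(a-1,\lambda)]=-2i\operatorname{Im} m$ and Lemma \ref{consequence} — neither of which uses the boundary condition $U_2(\chi(\cdot,\lambda))=0$ — give, by the same computation as in the proof of Theorem \ref{mainlemma}, the identity $[\chi(N,\lambda),\chi(N,\lambda)]=2i\bigl(\operatorname{Im}\lambda\sum_{t\in\mathcal{I}_N}\chi^*(t,\lambda)\chi(t,\lambda)-\operatorname{Im} m\bigr)$ for every $m\in\mathbb{C}$. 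Together with \eqref{310} this means precisely that $m$ lies in the closed disk bounded by $C_N$ if and only if $\sum_{t\in\mathcal{I}_N}\chi^*(t,\lambda)\chi(t,\lambda)\le\operatorname{Im} m/\operatorname{Im}\lambda$, where $\chi=\varphi+m\psi$.

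Granting this, the corollary follows from monotonicity of partial sums. Given $N<N'$, I would take any $m$ in the closed disk bounded by $C_{N'}$, so that $\sum_{t\in\mathcal{I}_{N'}}\chi^*(t,\lambda)\chi(t,\lambda)\le\operatorname{Im} m/\operatorname{Im}\lambda$ with $\chi=\varphi+m\psi$. Since $\mathcal{I}_N\subseteq\mathcal{I}_{N'}$ and each summand $\chi^*(t,\lambda)\chi(t,\lambda)=|\chi_1(t,\lambda)|^2+|\chi_2(t,\lambda)|^2$ is nonnegative, the partial sum over $\mathcal{I}_N$ is dominated by that over $\mathcal{I}_{N'}$, hence $\sum_{t\in\mathcal{I}_N}\chi^*(t,\lambda)\chi(t,\lambda)\le\operatorname{Im} m/\operatorname{Im}\lambda$ as well, i.e.\ $m$ lies in the closed disk bounded by $C_N$. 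As $m$ ranges over the closed disk of $C_{N'}$ this yields the containment, that is, $C_{N'}$ is inside $C_N$.

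For the strict form of the statement (the circle $C_{N'}$ lying in the open disk of $C_N$) the remaining point is that the missing block $\sum_{t=N+1}^{N'}\chi^*(t,\lambda)\chi(t,\lambda)$ is strictly positive. Here I would invoke that $\chi=\varphi+m\psi$ is a nontrivial solution of \eqref{00}, since $\varphi(\cdot,\lambda)$ and $\psi(\cdot,\lambda)$ are linearly independent as shown earlier in this subsection, together with the uniqueness part of Theorem \ref{initial}: if $\chi(t,\lambda)$ vanished throughout a sufficiently long block, then \eqref{004}, \eqref{291}, and \eqref{002} (whose coefficient $\widetilde p(\cdot,\lambda)$ does not vanish for $\lambda\notin\mathbb{R}$) would produce two consecutive zeros of $\chi_1$, hence $\chi_1\equiv0$ and therefore $\chi\equiv0$, a contradiction. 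Consequently, for $m$ on $C_{N'}$ one gets $\sum_{t\in\mathcal{I}_N}\chi^*(t,\lambda)\chi(t,\lambda)<\sum_{t\in\mathcal{I}_{N'}}\chi^*(t,\lambda)\chi(t,\lambda)=\operatorname{Im} m/\operatorname{Im}\lambda$, placing $m$ strictly inside $C_N$.

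I expect the only genuine obstacle to be this last point: an individual term $\chi^*(t,\lambda)\chi(t,\lambda)$ may well vanish at an isolated $t$ (for instance when $\chi_1$ has a zero at $t$ and $c(t)=0$ there), so the strict positivity of the block sum must be extracted from the uniqueness theorem — and, in the extremal case $N'=N+1$, may require a little extra care — rather than from positivity of a single summand. The non-strict nesting, which is all that is needed for the limiting set treated in the remainder of this subsection to be well defined, requires none of this and follows purely from the monotonicity of the partial sums.
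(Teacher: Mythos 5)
Your proposal is essentially the paper's own proof: take $m$ on $C_{N'}$, use the characterization in Theorem \ref{mainlemma} to write $\sum_{t\in\mathcal{I}_{N'}}\chi^*(t,\lambda)\chi(t,\lambda)=\operatorname{Im}m/\operatorname{Im}\lambda$, and conclude from monotonicity of the partial sums that $m$ lies inside $C_N$. The only difference is that the paper declares the strict inequality between the two partial sums ``evident,'' whereas you supply the nontriviality/uniqueness justification for it and correctly flag the one-step case $N'=N+1$ as the only delicate point.
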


\begin{proof}
Let $m \in C_{N^{\prime }}$. Then, it is evident that\vspace{-2mm}
\begin{equation}\label{303146}
\sum_{t \in \mathcal{I}_N} \chi^*(t, \lambda ) \chi(t, \lambda ) < \sum_{t \in \mathcal{I}_{N'}} \chi^*(t, \lambda ) \chi(t, \lambda ) =\frac{\operatorname{Im} m }{\operatorname{Im} \lambda}.
\end{equation}
Therefore,  $m$ is inside   $C_{N}$ by Theorem \ref{mainlemma} which implies that $C_{N^{\prime }}$ is inside  $C_{N}$. This completes the proof.
\end{proof}

By  Corollary \ref{inside}, the sequence of circles $\{C_{N}\}$ converges   as  $N\rightarrow +\infty$. The limiting set is either a circle or a point.
Correspondingly, the classification of $\mathcal{{L}}$ or $(1.1_\lambda)$ can be given as follows.

\begin{definition}\label{mainlemma1}
If $\{C_{N}\}$ converges to a circle, then $\mathcal{{L}}$ or $(1.1_\lambda)$ is  called  to be in the limit circle case (LCC) at $t=\infty$,
and  if $\{C_{N}\}$ converges to a point,  then $\mathcal{{L}}$ or $(1.1_\lambda)$ is  called  to be in the limit point case (LPC) at $t=\infty$.
\end{definition}

\subsection{\normalsize\bf Relationships between square summable solutions and the      classification}
\ \ \ \ \ In this subsection, we shall derive an equivalent characterization  of the classification in terms of
the number of linearly independent solutions of $(1.1_\lambda)$ in $l^{2}(\mathcal{I})$. Here, we remark that solutions of $(1.1_\lambda)$ in $l^{2}(\mathcal{I})$
are also called square summable solutions of $(1.1_\lambda)$. The following is the main result of this section:

\begin{theorem}\label{LCC}
If there exists  $ \lambda_0 \in   {\Omega}^{\prime}(\mathcal{M},d) $ such that  $({1.1}_{\lambda_0})$  has two  linearly independent solutions
in $l^{2}(\mathcal{I}),$ then $\mathcal{{L}}$ or $(1.1_\lambda)$ is in the  LCC  at $ t=\infty$.
Otherwise, $\mathcal{{L}}$ or $(1.1_\lambda)$ is in the  LPC  at $ t=\infty$.
\end{theorem}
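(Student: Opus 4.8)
The plan is to follow the classical Weyl dichotomy argument, adapted to the quasi-difference $y^{[1]}$ introduced in Section~2. First I would fix a nonreal $\lambda_0 \in \Omega'(\mathcal{M},d)$ (any nonreal number lies in $\Omega'(\mathcal{M},d)$ since $\mathcal M$ and $d$ are real-valued). The equation and interior of the Weyl circle $C_N$ are described by \eqref{310}, so for $m$ in the closed disc bounded by $C_N$ one has $\sum_{t\in\mathcal I_N}\chi^*(t,\lambda_0)\chi(t,\lambda_0)\le \operatorname{Im} m/\operatorname{Im}\lambda_0$. By Corollary~\ref{inside} the discs are nested, so picking any $m_\infty$ in the limiting set (circle or point) and letting $N\to\infty$ shows that $\chi(\cdot,\lambda_0)=\varphi(\cdot,\lambda_0)+m_\infty\psi(\cdot,\lambda_0)$ lies in $l^2(\mathcal I)$. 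Thus in \emph{both} cases $(1.1_{\lambda_0})$ always has at least one square summable solution.

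Next I would treat the LCC side. If $\{C_N\}$ converges to a circle, its radius $r_\infty=\lim r_N>0$, and by \eqref{9312}, $r_N = 1/\big|[\psi(N,\lambda_0),\psi(N,\lambda_0)]\big|$ together with \eqref{1351352} gives
\[
\frac{1}{r_N}=2|\operatorname{Im}\lambda_0|\sum_{t\in\mathcal I_N}\psi^*(t,\lambda_0)\psi(t,\lambda_0).
\]
Since $1/r_N\to 1/r_\infty<\infty$, the partial sums $\sum_{t\in\mathcal I_N}\psi^*\psi$ are bounded, so $\psi(\cdot,\lambda_0)\in l^2(\mathcal I)$. Combined with $\chi(\cdot,\lambda_0)=\varphi+m_\infty\psi\in l^2(\mathcal I)$ this yields $\varphi(\cdot,\lambda_0)\in l^2(\mathcal I)$ as well, and $\varphi,\psi$ are linearly independent (they have the Wronskian-type normalization \eqref{344}, $[\varphi(N,\lambda),\overline{\psi}(N,\lambda)]=1$). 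Hence $(1.1_{\lambda_0})$ has two linearly independent solutions in $l^2(\mathcal I)$. Conversely, if for \emph{some} $\lambda_0\in\Omega'(\mathcal M,d)$ the equation $(1.1_{\lambda_0})$ has two linearly independent $l^2$ solutions, then $\psi(\cdot,\lambda_0)\in l^2(\mathcal I)$ (since by Corollary~\ref{dimension} every solution is a linear combination of $\varphi,\psi$ and both basis solutions lie in the span), so $\sum_{t\in\mathcal I_N}\psi^*\psi$ is bounded, hence $1/r_N$ is bounded, hence $r_N$ is bounded away from $0$; since the discs are nested and have bounded-below radii, the limiting set cannot be a single point, so it is a circle and $\mathcal L$ is in the LCC. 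This gives the first assertion of the theorem and also its contrapositive: if for \emph{no} $\lambda_0$ does $(1.1_{\lambda_0})$ have two independent $l^2$ solutions, then in particular for our fixed nonreal $\lambda_0$ it does not, so $\{C_N\}$ cannot converge to a circle and must converge to a point, i.e.\ $\mathcal L$ is in the LPC.

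There is one subtlety to address carefully, which I expect to be the main obstacle: the equivalence must be independent of which nonreal $\lambda_0$ is used, whereas the Weyl-circle construction of Section~3.1 was carried out for a fixed $\lambda\in\mathbb C\setminus\mathbb R$. The standard remedy is to show that the number of $l^2$ solutions of $(1.1_\lambda)$ is the same for all $\lambda\in\Omega'(\mathcal M,d)$; concretely, if $\lambda_0$ has two independent $l^2$ solutions then so does every $\lambda$, by a variation-of-constants estimate using the Green's formula / the constancy of the Lagrange bracket (Lemma~\ref{constant}) and Gronwall-type summation to control $\chi(\cdot,\lambda)-\chi(\cdot,\lambda_0)$. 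I would either prove this persistence lemma or, more economically, simply observe that the definition of LCC/LPC is stated for $\mathcal L$ (equivalently for the fixed nonreal $\lambda$ used in §3.1), so that the theorem as phrased only requires the argument of the previous paragraph for that fixed $\lambda$, together with the easy remark that having two $l^2$ solutions at \emph{some} $\lambda_0$ forces it at the chosen nonreal $\lambda$ via the persistence lemma. The remaining steps — extracting a convergent subsequence of centers $O_N$, identifying $m_\infty$, and the linear-independence bookkeeping via \eqref{344} — are routine.
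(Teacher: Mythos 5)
Your circle-geometry argument at a fixed nonreal $\lambda$ is exactly the paper's route (its Lemma \ref{weyl} together with Theorem \ref{mainlemma} and Corollary \ref{inside}), and that part is sound: the nested discs give $\chi=\varphi+m_\infty\psi\in l^{2}(\mathcal I)$ in both cases, $r_N\not\to0$ forces $\psi\in l^{2}(\mathcal I)$, and $r_N\to0$ forces $\psi\notin l^{2}(\mathcal I)$, with \eqref{344} settling independence. The genuine gap is the transfer from the hypothesized $\lambda_0\in\Omega'(\mathcal M,d)$ to the nonreal $\lambda$ at which the circles are built. You correctly flag this as the main obstacle, but you only sketch the persistence lemma, and your ``more economical'' fallback does not work: the hypothesis of the theorem allows $\lambda_0$ to be \emph{real} (any point of $\Omega'(\mathcal M,d)$), while the LCC/LPC definition and your whole disc argument live at a nonreal $\lambda$ (formulas \eqref{1351352} and \eqref{310} degenerate when $\operatorname{Im}\lambda_0=0$), so one cannot ``simply observe'' that the fixed-$\lambda$ argument suffices; the statement would then only be proved for nonreal $\lambda_0$, which is weaker than the theorem.

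In the paper this transfer is the technical core: a discrete variation-of-constants representation of an arbitrary solution $z$ of $(1.1_\lambda)$ in terms of the $l^{2}$ basis $\varphi,\psi$ of $(1.1_{\lambda_0})$ must be derived for \emph{both} components, see \eqref{3000} and \eqref{30001}; the second component carries the factor $\dfrac{\lambda_0-\mathcal M(t)}{\lambda-\mathcal M(t)}$, whose boundedness uses precisely $\inf_{t}|\lambda-\mathcal M(t)|>0$, i.e.\ $\lambda\in\Omega'(\mathcal M,d)$ --- a point specific to the mixed-order structure that a generic ``Gronwall-type summation'' sketch does not address. The smallness argument is also not an iteration: one chooses $N_0$ so large that the tail quantity $\gamma_{N_0}$ built from $\varphi,\psi$ satisfies $K_0|\lambda-\lambda_0|\gamma_{N_0}\le\frac14$, absorbing $\frac14\|z\|_{N_0}^{\tau}$ into the left-hand side of \eqref{32763} and \eqref{3277} to conclude $z\in l^{2}(\mathcal I)$ (Lemmas 3.3 and 3.4 of the paper). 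Until you actually prove this persistence statement, the LCC direction of the theorem for real $\lambda_0\in\Omega'(\mathcal M,d)$ remains unproved; with it, your argument coincides with the paper's proof.
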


Before proving Theorem \ref{LCC}, we need derive three results in what follows.
First, we can get the following result:
\begin{lemma}\label{weyl}
If $\mathcal{{L}}$ is in the LCC at $t=\infty$, then $(1.1_\lambda)$ with $\lambda \in \mathbb{C} \backslash \mathbb{R}$ has exactly two linearly independent solutions in $l^{2}(\mathcal{I}),$ and
if $\mathcal{{L}}$ is in the LPC at $t=\infty$, then $(1.1_\lambda)$ with $\lambda \in \mathbb{C} \backslash \mathbb{R}$ has exactly one linearly independent solutions in $l^{2}(\mathcal{I})$.
\end{lemma}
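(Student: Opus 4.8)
The plan is to run the classical Weyl nesting-circles argument in the present discrete setting, using the centre/radius formulas of Theorem \ref{mainlemma} and the radius identity coming from \eqref{1351352}. Fix $\lambda\in\mathbb{C}\setminus\mathbb{R}$; since $\mathcal{M}$ and $d$ are real-valued, $\lambda\in{\Omega}^{\prime}(\mathcal{M},d)$, so the construction of Subsection 3.1 is available. By Corollary \ref{inside} the closed discs bounded by the circles $C_N$ form a decreasing nested family, and their intersection is the limiting set of $\{C_N\}$: a genuine disc of radius $r_\infty>0$ in the LCC and a single point (that is, $r_\infty=0$) in the LPC, with $r_N\downarrow r_\infty$. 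Choose any point $m_\infty=m_\infty(\lambda)$ of this limiting set. Since $m_\infty$ lies in the closed disc bounded by $C_N$ for every $N$, extending the ``interior'' inequality of \eqref{310} to the closed disc (obtained from \eqref{1351351} and \eqref{38} exactly as in the proof of Theorem \ref{mainlemma}, now with $\le$ replacing $<$) gives
\begin{equation*}
\sum_{t\in\mathcal{I}_N}\chi_\infty^*(t,\lambda)\,\chi_\infty(t,\lambda)\le\frac{\operatorname{Im}m_\infty}{\operatorname{Im}\lambda},\qquad \chi_\infty:=\varphi+m_\infty\psi,
\end{equation*}
for every $N\in\mathcal{I}$. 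Letting $N\to\infty$ shows $\chi_\infty\in l^{2}(\mathcal{I})$, and $\chi_\infty\not\equiv 0$ because $\varphi$ and $\psi$ are linearly independent. Hence, for every $\lambda\in\mathbb{C}\setminus\mathbb{R}$, equation $(1.1_\lambda)$ has at least one nontrivial solution in $l^{2}(\mathcal{I})$.

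Suppose $\mathcal{L}$ is in the LCC. Combining the radius formula in \eqref{9312} with \eqref{1351352} yields
\begin{equation*}
\sum_{t\in\mathcal{I}_N}\psi^*(t,\lambda)\,\psi(t,\lambda)=\frac{1}{2\,|\operatorname{Im}\lambda|\,r_N}.
\end{equation*}
Since $r_N\downarrow r_\infty>0$, the left-hand side remains bounded as $N\to\infty$, so $\psi\in l^{2}(\mathcal{I})$, and then $\varphi=\chi_\infty-m_\infty\psi\in l^{2}(\mathcal{I})$ as well. Because $\{\varphi,\psi\}$ is a basis of the two-dimensional solution space (Corollary \ref{dimension}), every solution of $(1.1_\lambda)$ lies in $l^{2}(\mathcal{I})$; in particular $(1.1_\lambda)$ has exactly two linearly independent solutions in $l^{2}(\mathcal{I})$.

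Suppose instead that $\mathcal{L}$ is in the LPC, so $r_N\downarrow 0$. The identity above then forces $\sum_{t\in\mathcal{I}_N}\psi^*(t,\lambda)\psi(t,\lambda)\to+\infty$, i.e. $\psi\notin l^{2}(\mathcal{I})$. Were some solution $u$ of $(1.1_\lambda)$ lying in $l^{2}(\mathcal{I})$ linearly independent of $\chi_\infty$, then $\{\chi_\infty,u\}$ would be a basis of the solution space and $\psi$ would be a linear combination of two members of $l^{2}(\mathcal{I})$, hence in $l^{2}(\mathcal{I})$, a contradiction. Thus every solution of $(1.1_\lambda)$ in $l^{2}(\mathcal{I})$ is a scalar multiple of $\chi_\infty$, which together with the first paragraph gives exactly one linearly independent solution of $(1.1_\lambda)$ in $l^{2}(\mathcal{I})$.

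The two steps requiring genuine care are: first, upgrading Theorem \ref{mainlemma} from the circle $C_N$ and its open interior to the whole closed disc, so that the bound on the partial sums of $\chi_\infty^*\chi_\infty$ is available for the single fixed point $m_\infty$; and second, the elementary but not entirely trivial fact that the radii of a decreasing nested family of discs tend to the radius of the limiting disc, so that the dichotomy in Definition \ref{mainlemma1} is precisely $r_\infty>0$ versus $r_\infty=0$. Everything after these points is a routine passage to the limit that uses only the two-dimensionality of the solution space and the linear independence of $\varphi$ and $\psi$ established earlier.
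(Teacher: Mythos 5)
Your proposal is correct and follows essentially the same route as the paper: pick a point of the limiting set, use its membership in every Weyl disc together with the inequality in \eqref{310} to produce the $l^2$ solution $\chi$, and use the radius formula \eqref{9312} with \eqref{1351352} to decide whether $\psi\in l^{2}(\mathcal{I})$ according as $r_N$ tends to a positive limit or to zero. Your only deviations are cosmetic refinements (allowing $\le$ on the closed disc and spelling out the uniqueness argument in the LPC case), which the paper treats implicitly.
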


\begin{proof}
If $\mathcal{{L}}$ is in the LCC at $t=\infty$, then  $\{C_{N}\}$ converges to a circle.  We take a point of this  circle  as $m$.
 If $\mathcal{{L}}$ is in the LPC at $t=\infty$, then $\{C_{N}\}$ converges to a point.  In this case, we take this point  as $m.$
 Then  $m\in C_{N}$ for  all $N>a$ by Corollary \ref{inside}. Let  $\chi(t, \lambda) $ be given by \eqref{03707} with this $m$. Then by the second formula of \eqref{310}, we have
$$ \sum_{t \in \mathcal{I}_N} \chi^*(t, \lambda ) \chi(t, \lambda )<\frac{\operatorname{Im} m }{\operatorname{Im} \lambda},~ ~  N>a, $$
which implies that  $\chi(\cdot, \lambda) \in l^{2}(\mathcal{I})$.

Furthermore, if $\{C_{N}\}$ converges to a circle, then   $\{r_{N}\}$ converges to
a positive  number. Then, from \eqref{34}, \eqref{1351352}, and \eqref{9312}, we get  $\psi(\cdot, \lambda) \in l^{2}(\mathcal{I}).$    Therefore $(1.1_\lambda)$ has two
linearly independent  solutions in $l^{2}(\mathcal{I})$ since   $\psi(t, \lambda)$ and $\chi\left(t, \lambda \right)$ are linearly independent on $\mathcal{I}$.
If  $\{C_{N}\}$ converges to a point, then  $\{r_{N}\}$ converges to $0$. From \eqref{1351352} and \eqref{9312}, we get  that $\psi(\cdot, \lambda) \notin l^{2}(\mathcal{I})$. Hence, $\chi(t, \lambda) $ is the only
linearly independent solution of $(1.1_\lambda)$ in $l^{2}(\mathcal{I})$.
This completes the proof.
\end{proof}

\begin{lemma}
Let $\lambda_{0}, \lambda \in {\Omega}^{\prime}(\mathcal{M},d) $ and
 $\varphi(t,\lambda_0)=(\varphi_1(t,\lambda_0),\varphi_2(t,\lambda_0))^{\mathrm{T}}$ and $\psi (t,\lambda_0) =(\psi_1(t,\lambda_0),\psi_2(t,\lambda_0))^{\mathrm{T}} $ be linearly independent solutions of $({1.1}_{\lambda_0})$.
Then for a solution  $z=(z_1,z_2)^{\mathrm{T}}$     of $({1.1}_{\lambda})$, there exist  two constants  ${k_1}$  and ${k_2}$ independently of $t$, i.e., only depending on $N\in \mathcal{I}$, $\lambda$, and  $\lambda_0$, such that for $t > N+2, $
\begin{equation}\label{3000}
 \begin{aligned}
z_1(t)&= {{k_1}}  \psi_{1}(t)+{{k_2} } \varphi_{1} (t)+\hspace{-1mm}\left(\lambda_0\hspace{-1mm} -\hspace{-1mm}\lambda\right)\hspace{-2mm}
\sum^{t-1}_{s=N+1 }\hspace{-2mm} \Big( \psi_{1}(t) \varphi^{\mathrm{T}}(s)-\varphi_{1}(t) \psi^{\mathrm{T}}(s)  \Big)z(s),
\end{aligned}
\end{equation}
and\vspace{-2mm}
\begin{equation}\label{30001}
 \begin{aligned}
z_2(t)= &\frac{  \lambda_0- \mathcal{M}(t)  }{ \lambda- \mathcal{M}(t) } \Big\{{{k_1}}\psi_{2}(t)+{{k_2}} \varphi_{2}(t)\\
&\ \ \ \ \ \ \ \ +\left(\lambda_0\hspace{-1mm}-\hspace{-1mm}\lambda \right)  \hspace{-2mm}\sum^{t}_{s=N+1 } \hspace{-2mm}\Big( \psi_{2}(t) \varphi^{\mathrm{T}}(s)-\varphi_{2}(t) \psi^{\mathrm{T}}(s)  \Big)z(s)
 \Big\}.
\end{aligned}
\end{equation}

\end{lemma}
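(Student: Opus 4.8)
The plan is to regard $({1.1}_{\lambda})$ as an inhomogeneous perturbation of $({1.1}_{\lambda_0})$, i.e. $\mathcal{L}(z)(t)-\lambda_0 z(t)=(\lambda-\lambda_0)z(t)$, and to run a discrete variation-of-parameters argument against the fundamental system $\varphi(\cdot,\lambda_0),\psi(\cdot,\lambda_0)$. The only computational tool required is a \emph{bilinear} companion of Lemma \ref{consequence}: since $p,q,c,h,d$ are real, the computation in the proof of Lemma \ref{21} stays valid after deleting every complex conjugate, and subtracting two instances of the resulting identity (to pass from an interval $\{a,\dots,r\}$ to $\{N+1,\dots,r\}$) gives, for any solution $u$ of $({1.1}_{\lambda_0})$, the given solution $z$ of $({1.1}_{\lambda})$, and every $r\ge N+1$,
$$(\lambda_0-\lambda)\sum_{s=N+1}^{r}u^{\mathrm{T}}(s)z(s)=W_u(r)-W_u(N),\qquad W_u(t):=u_1(t+1)z^{[1]}(t)-u^{[1]}(t)z_1(t+1).$$
I also use that, by Lemma \ref{constant}, $[\varphi(t,\lambda_0),\overline{\psi}(t,\lambda_0)]=\varphi_1(t+1,\lambda_0)\psi^{[1]}(t,\lambda_0)-\varphi^{[1]}(t,\lambda_0)\psi_1(t+1,\lambda_0)$ is a nonzero constant on $\mathcal{I}'$; throughout I normalize it to $1$ (for the solutions fixed by \eqref{case} this is exactly \eqref{344}; for a general basis one divides the sums in \eqref{3000}, \eqref{30001} by this constant). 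Finally I keep in mind the identity $\widetilde{p}(t,\lambda)=\frac{p(t)}{\lambda-d(t)}(\lambda-\mathcal{M}(t))$ and the fact that $\frac{(c(t)-h(t))c(t)}{p(t)}=d(t)-\mathcal{M}(t)$ by the definition of $\mathcal{M}$.

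\textbf{Step 1 (formula \eqref{3000}).} Call the right-hand side $\zeta_1(t)$ and apply the boxed identity with $u=\varphi(\cdot,\lambda_0)$, then with $u=\psi(\cdot,\lambda_0)$, and with $r=t-1$, to replace the two inner sums by $W_\varphi(t-1)-W_\varphi(N)$ and $W_\psi(t-1)-W_\psi(N)$. Expanding $W_\varphi(t-1)$ and $W_\psi(t-1)$, the $z^{[1]}(t-1)$ terms cancel and the $z_1(t)$ terms combine into $z_1(t)\bigl(\varphi_1(t)\psi^{[1]}(t-1)-\psi_1(t)\varphi^{[1]}(t-1)\bigr)$, whose bracket is $[\varphi(\cdot,\lambda_0),\overline{\psi}(\cdot,\lambda_0)]=1$. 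Hence $\zeta_1(t)=z_1(t)+\bigl(k_1-W_\varphi(N)\bigr)\psi_1(t)+\bigl(k_2+W_\psi(N)\bigr)\varphi_1(t)$, and since $W_\varphi(N)$ and $W_\psi(N)$ depend only on $N$, $\lambda$, $\lambda_0$ (not on the running index $t$), taking $k_1:=W_\varphi(N)$ and $k_2:=-W_\psi(N)$ yields \eqref{3000}.

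\textbf{Step 2 (formula \eqref{30001}).} The second component is determined by the first through the algebraic relation \eqref{004}, i.e. $(\lambda-d(t))z_2(t)=c(t)\Delta z_1(t)+h(t)z_1(t)$, while \eqref{004} at $\lambda_0$ gives $c(t)\Delta\varphi_1(t)+h(t)\varphi_1(t)=(\lambda_0-d(t))\varphi_2(t)$ and likewise for $\psi$. So I substitute the formula \eqref{3000} just obtained into $(\lambda-d(t))z_2(t)=c(t)\Delta z_1(t)+h(t)z_1(t)$ and compute $\Delta z_1(t)=z_1(t+1)-z_1(t)$: the upper summation limit passes from $t-1$ (in $z_1(t)$) to $t$ (in $z_1(t+1)$), which is exactly why the sum in \eqref{30001} runs up to $s=t$, and this change produces an additional term $(\lambda_0-\lambda)\bigl(\psi_1(t)\varphi^{\mathrm{T}}(t)-\varphi_1(t)\psi^{\mathrm{T}}(t)\bigr)z(t)$. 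Collecting and using $c\Delta\varphi_1+h\varphi_1=(\lambda_0-d)\varphi_2$ (and the $\psi$-analogue) to recognise the braced expression $\mathrm{B}(t)$ of \eqref{30001}, one arrives at
$$(\lambda-d(t))z_2(t)=(\lambda_0-d(t))\,\mathrm{B}(t)+(c(t)-h(t))(\lambda_0-\lambda)\bigl(\psi_1(t)\varphi^{\mathrm{T}}(t)-\varphi_1(t)\psi^{\mathrm{T}}(t)\bigr)z(t).$$
Now the key point: in $\bigl(\psi_1(t)\varphi^{\mathrm{T}}(t)-\varphi_1(t)\psi^{\mathrm{T}}(t)\bigr)z(t)$ the $z_1$-contributions cancel, leaving $\bigl(\psi_1(t)\varphi_2(t)-\varphi_1(t)\psi_2(t)\bigr)z_2(t)$; and by \eqref{022} together with $\widetilde{p}(t,\lambda_0)=\frac{p(t)}{\lambda_0-d(t)}(\lambda_0-\mathcal{M}(t))$ one gets $\psi_1(t)\varphi_2(t)-\varphi_1(t)\psi_2(t)=-\frac{c(t)}{p(t)(\lambda_0-\mathcal{M}(t))}$. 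Inserting this and $\frac{(c(t)-h(t))c(t)}{p(t)}=d(t)-\mathcal{M}(t)$, the displayed relation becomes an equation for $z_2(t)$ alone; solving it and cancelling the rational factors gives $z_2(t)=\frac{\lambda_0-\mathcal{M}(t)}{\lambda-\mathcal{M}(t)}\mathrm{B}(t)$, which is \eqref{30001} with the same $k_1$, $k_2$.

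The routine part is the bookkeeping of Step 2; the single delicate point there is the last cancellation — one has to notice that the ``extra'' $s=t$ contribution is proportional to $z_2(t)$ itself (so it can be moved to the left-hand side) and that the rational coefficient then collapses, via $\widetilde{p}(t,\lambda_0)=\frac{p(t)}{\lambda_0-d(t)}(\lambda_0-\mathcal{M}(t))$ and $\frac{(c(t)-h(t))c(t)}{p(t)}=d(t)-\mathcal{M}(t)$, to exactly $\frac{\lambda_0-\mathcal{M}(t)}{\lambda-\mathcal{M}(t)}$ and not to the naive $\frac{\lambda_0-d(t)}{\lambda-d(t)}$. A way to sidestep this algebra: define $\zeta=(\zeta_1,\zeta_2)$ by the two right-hand sides, check by a direct telescoping computation that $\zeta$ solves $({1.1}_{\lambda})$ for $t>N+2$, note that $\zeta$ and $z$ share the data $z_1$ and $z^{[1]}$ at one point of that range, and invoke the uniqueness in Theorem \ref{initial}, whose first-order-system proof applies verbatim on the shifted half-line $\{t\ge N+1\}$.
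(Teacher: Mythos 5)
Your argument is correct, and it reaches \eqref{3000} and \eqref{30001} by a route that differs in implementation from the paper's. The paper proceeds constructively: it introduces the weighted Wronskian-type coefficients $A(t)=\widetilde{p}(t,\lambda_0)\big[z_1(t+1)\varphi_1(t)-z_1(t)\varphi_1(t+1)\big]$ and $B(t)$ (with $\psi_1$ in place of $\varphi_1$), derives first-order difference equations \eqref{3180}--\eqref{3181} for them from the reduced scalar equation \eqref{002} written at $\lambda$ versus $\lambda_0$, sums these to get \eqref{3202} and \eqref{39468} (introducing the auxiliary boundary functions $L(t),K(t)$ which are then eliminated via \eqref{14328}), and finally substitutes into the representations $z_1=A\psi_1-B\varphi_1$, $z_2=\frac{\lambda_0-d}{\lambda-d}(A\psi_2-B\varphi_2)$, converting $\frac{\lambda_0-d}{\lambda-d}$ into $\frac{\lambda_0-\mathcal{M}}{\lambda-\mathcal{M}}$ by the same rational identity you use at the end (this is the step from \eqref{143330} to \eqref{30001}). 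You instead evaluate the sums in \eqref{3000} directly through a bilinear (unconjugated) version of the Green formula of Lemmas \ref{21}--\ref{consequence} applied to the full $2$-vector system, so the boundary data are packaged in $W_\varphi(N),W_\psi(N)$ via the quasi-difference $z^{[1]}$ rather than in $A(N),B(N),L,K$; and you obtain the second component from the algebraic row \eqref{004} applied to \eqref{3000} at $t$ and $t+1$, with the extra $s=t$ term absorbed exactly as you describe (I checked the collapse $(\lambda-d)+\frac{(\lambda_0-\lambda)(d-\mathcal{M})}{\lambda_0-\mathcal{M}}=\frac{(\lambda_0-d)(\lambda-\mathcal{M})}{\lambda_0-\mathcal{M}}$, and your identity $\psi_1\varphi_2-\varphi_1\psi_2=-\frac{c}{p(\lambda_0-\mathcal{M})}$, which both hold). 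What your route buys is the avoidance of the $\nabla A,\nabla B$ computations and of the auxiliary functions $L,K$; what the paper's route buys is that it derives the formula rather than verifying a candidate, producing the constants explicitly. One shared caveat: the coefficient $(\lambda_0-\lambda)$ in front of the sums is correct only when the bilinear bracket of the basis equals $1$, i.e.\ for the pair normalized as in \eqref{case} (the paper uses this tacitly through \eqref{32563} and \eqref{344}); you flag this normalization explicitly, which is appropriate, since for a general independent pair the sums must be divided by that constant.
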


\begin{proof}
Let $\lambda_{0}, \lambda \in {\Omega}^{\prime}(\mathcal{M},d) $  and  $z=(z_1,z_2)^{\mathrm{T}}$ be a solution    of $({1.1}_{\lambda})$, and for $t\in {\mathcal{I^{\prime}}}$ set\vspace{-2mm}
\begin{align*}
A(t)&:= \widetilde{p}(t, \lambda_0)\left[z_1(t+1,\lambda)\varphi_{1}(t,\lambda_0)- z_1(t,\lambda)\varphi_{1}(t+1,\lambda_0) \right],\\
B(t)&:= \widetilde{p}(t, \lambda_0)\left[  z_1(t+1,\lambda)\psi_{1}(t,\lambda_0)- z_1(t,\lambda)\psi_{1}(t+1,\lambda_0)\right].
\end{align*}
Then, we claim that  for   $t\in {\mathcal{I}},$  \vspace{-2mm}
\begin{align}
\label{11317} z_1(t,\lambda)&=  A(t-1)\psi_{1}(t,\lambda_0)-B(t-1)\varphi_{1}(t,\lambda_0), \\
\label{11318}z_2(t,\lambda)&= \frac{\lambda_{0}-d(t)}{\lambda-d(t)} \Big(A(t)\psi_{2}(t,\lambda_0)-B(t)\varphi_{2}(t,\lambda_0)\Big) .\vspace{-2mm}
\end{align}
In fact, by Lemma \ref{constant} and    \eqref{case}, one has
\begin{equation}\label{32563}
\widetilde{p}(t, \lambda_0)\left[ \psi_{1}(t+1,\lambda_0) \varphi_1(t,\lambda_0) - \psi_{1}(t,\lambda_0) \varphi_1(t+1, \lambda_0)\right]=1, ~ t\in {\mathcal{I^{\prime}}},
\end{equation}
and  thus, by the definitions of $A(t)$ and $B(t)$,  it is easy to verify that
\begin{equation}\label{34653}
z_1(t,\lambda)= A(t)\psi_{1}(t,\lambda_0)-B(t)\varphi_{1}(t,\lambda_0), ~ t\in {\mathcal{I^{\prime}}}.
\end{equation}
Since   $z$ is a solution of $({1.1}_{\lambda})$, we have  \eqref{002} and  \eqref{004} hold for $z$.
From \eqref{002} and  \eqref{004} for $z$ and by \eqref{003},   it can be derived  that
\begin{equation}\label{3150}
\begin{array}{ll}\vspace{2mm}
&- \nabla \left( \widetilde{p}(t, \lambda_0) \Delta z_{1}(t,\lambda)\right) +\left(\tilde{q}\left(t, \lambda_{0}\right)-\lambda_{0}\right) z_{1}(t,\lambda)\vspace{2mm} \\
=&\hspace{-3mm} \nabla \left[   \left(\widetilde{p}(t, \lambda)-\widetilde{p}(t, \lambda_0) \right) \Delta z_1 (t,\lambda) \right]
+\left(\tilde{q}\left(t, \lambda_{0}\right)-\tilde{q}\left(t, \lambda \right)\right) z_{1}(t,\lambda)+(\lambda-\lambda_0)z_1(t,\lambda) \vspace{3mm}\\
=& \hspace{-3mm}\left(\lambda_0-\lambda\right)\left\{ \nabla\left[\displaystyle\frac{c(t)}{\lambda_0-d(t)}z_2(t, \lambda)-\displaystyle\frac{h(t)c(t)z_1(t+1, \lambda)}{(\lambda_0-d(t))(\lambda-d(t))}\right]\right. \vspace{3mm}\\
&\hspace{-3mm}\left. -\displaystyle\frac{h^2(t)z_1(t, \lambda)}{(\lambda_0-d(t))(\lambda-d(t))}+\nabla\left(\displaystyle\frac{h(t)c(t)}{(\lambda_0-d(t))(\lambda-d(t))}\right)z_1(t, \lambda)-z_1(t, \lambda) \right\}  \vspace{3mm}\\
=&\hspace{-3mm} \left(\lambda_0-\lambda\right)\left[\nabla\left(\displaystyle\frac{c(t)}{\lambda_0-d(t)} z_2(t, \lambda)\right)- \displaystyle\frac{h(t)}{\lambda_0 -d(t) }z_2(t, \lambda)-z_1(t, \lambda) \right], ~ t\in \mathcal{I}. \\
\end{array}
\end{equation}
In addition,  \eqref{002} holds for $\varphi_1$, i.e.,
\begin{equation}\label{3151}
\begin{aligned}
&- \nabla \left( \widetilde{p}(t, \lambda_0) \Delta \varphi_{1}(t, \lambda_0)\right) +\left(\tilde{q}\left(t, \lambda_{0}\right)-\lambda_{0}\right) \varphi_{1}(t, \lambda_0)=0, ~ t\in \mathcal{I}.\\
\end{aligned}
\end{equation}
Here, we remark that similarly to those in \eqref{3000} and \eqref{30001}, we omit $\lambda$ and $\lambda_0$ for
simplicity, e.g., write $\varphi_{1}(t, \lambda_0)$ as $\varphi_{1}(t)$ and $z_{1}(t,\lambda)$ as $z_{1}(t)$, in what follows.
Multiplying both sides of \eqref{3150} by $-\varphi_{1}(t)$  and    \eqref{3151} by $z_{1}(t)$,  and adding them give  that
\begin{equation}\label{3180}
\nabla A(t) =-\left(\lambda_{0}-\lambda\right)\left[\nabla\left(\frac{c(t)}{\lambda_0-d(t)} z_2(t)\right) - \frac{h(t)}{\lambda_0 -d(t) }z_2(t)-z_1(t) \right]\varphi_{1}(t), ~ t\in \mathcal{I}.
\end{equation}
With a similar argument to that of \eqref{3180},
we have
\begin{equation}\label{3181}
\nabla B(t) =-\left(\lambda_{0}-\lambda \right)\left[ \nabla\left(\frac{c(t)}{\lambda_0-d(t)} z_2(t)\right) - \frac{h(t)}{\lambda_0 -d(t) }z_2(t)-z_1(t) \right]\psi_{1}(t), ~ t\in \mathcal{I}.
\end{equation}
Multiplying both sides of   \eqref{3180} by $-\psi_{1}(t)$  and \eqref{3181} by  $\varphi_{1}(t)$,  and adding them give that
\begin{equation}\label{11328}
A(t)\psi_{1}(t)-B(t)\varphi_{1}(t)=A(t-1)\psi_{1}(t)-B(t-1)\varphi_{1}(t), ~ t\in \mathcal{I},
\end{equation}
which   yields that   \eqref{11317} holds by \eqref{34653}.
From  \eqref{11317}  and  \eqref{11328}, it can be obtained  that \vspace{-2mm}
\begin{equation}\label{1132880}
 \Delta z_1(t)=A(t)\Delta\psi_{1}(t)-B(t)\Delta \varphi_{1}(t), ~ t\in {\mathcal{I}}.\vspace{-2mm}
\end{equation}
 Then, from \eqref{004} for $z$, $\varphi$, and $\psi$, respectively, \eqref{34653}, and \eqref{1132880}, we get  that   \eqref{11318} holds.

Next,  for $t> N+2$,  summing up \eqref{3180} from $N+1$ to $t$ gives
\begin{equation}\label{3202}
\begin{aligned}
A(t)&=A(N)-\left(\lambda_0-\lambda \right)\hspace{-1mm}
\sum^{t}_{s=N+1} \hspace{-1mm}\left[   \nabla\left(\hspace{-1mm}\frac{c(s)}{\lambda_0-d(s)} z_2(s)\hspace{-1mm}\right) - \frac{h(s)}{\lambda_0 -d(s) }z_2(s) -z_1(s)  \right] \varphi_1(s)\\
&=A(N)-\left(\lambda_0-\lambda \right) \Big[L(t)-L(N)- \sum^{t}_{s=N+1}\big(z_2(s)\varphi_{2}(s)+z_1(s)\varphi_{1}(s)\big)\Big],\\
&=A(N)-\left(\lambda_0-\lambda \right) \Big(L(t)-L(N)-   \sum^{t}_{s=N+1} \varphi^{\mathrm{T}}(s) z(s) \Big),\\
\end{aligned}
\end{equation}
where $L(t):=\dfrac{c(t)}{\lambda_0 -d(t)}z_{2}(t)\varphi_{1}(t+1).$
Similarly, for $t> N+2$, we get \vspace{-2mm}
\begin{equation}\label{39468}
B(t) =B(N)-\left(\lambda_0-\lambda \right)\Big(K(t)-K(N)- \sum^{t}_{s=N+1} \psi^{\mathrm{T}}(s) z(s)     \Big),\vspace{-4mm}
\end{equation}
where  $K(t):=\dfrac{c(t)}{\lambda_0 -d(t)}z_{2}(t)\psi_{1}(t+1).$ On the other hand, we get  that
\begin{equation}\label{323343}
L(t-1)\psi_{1}(t)-K(t-1)\varphi_{1}(t)=0,\ t> N+2,\vspace{-2mm}
\end{equation}
and    from  \eqref{004} and \eqref{32563} that
\begin{equation}\label{14328}
L(t)\psi_{2}(t)-K(t)\varphi_{2}(t)=\frac{c^{2}(t)-h(t)c(t) }{ {(\lambda_0-d(t))}^{2} \widetilde{p}(t,\lambda_0) }  z_{2}(t), \ t> N+2.
\end{equation}
Then,  inserting \eqref{3202}-\eqref{323343} into  \eqref{11317}, we get that there exist  two constants  ${k_1}$  and ${k_2}$ depending on $N$, $\lambda$, and  $\lambda_0$,
such that \eqref{3000} holds for $t> N+2$.
Similarly,  inserting \eqref{3202}, \eqref{39468}, and  \eqref{14328} into    \eqref{11318},  we get
\begin{equation}\label{143330}
\begin{aligned}
z_2(t)= &\frac{  \lambda_0- d(t)  }{ \lambda- d(t) } \Big\{{{k_1}}\psi_{2}(t)+{{k_2}} \varphi_{2}(t)
 +\left(\lambda_0\hspace{-1mm}-\hspace{-1mm}\lambda \right)   \Big[\frac{h(t)c(t)-c^{2}(t) }{ {(\lambda_0-d(t))}^{2} \widetilde{p}(t,\lambda_0) }  z_{2}(t)\\
 &\qquad \qquad  \ +\sum^{t}_{s=N+1 } \hspace{-2mm}\Big( \psi_{2}(t) \varphi^{\mathrm{T}}(s)-\varphi_{2}(t) \psi^{\mathrm{T}}(s)  \Big)z(s)
 \Big] \Big\}, \ t> N+2,
\end{aligned}
\end{equation}
where ${k_1}$  and ${k_2}$ are the same as those in \eqref{3000}.
It can be easily verified that  $$1-\frac{ (\lambda_0 - \lambda  )    (h(t)c(t)-c^{2}(t) )  }{ (\lambda- d(t)) {(\lambda_0-d(t))} \widetilde{p}(t,\lambda_0)}     = \frac{  \lambda_0- d(t)  }{ \lambda- d(t) } \frac{  \lambda - \mathcal{M}(t)  }{ \lambda_0- \mathcal{M}(t) },  $$
which, together with  \eqref{143330},
yields that  \eqref{30001} holds for $t> N+2$. This completes the proof.
\end{proof}

\begin{lemma}\label{31}
If there exists $\lambda_{0}  \in {\Omega}^{\prime}(\mathcal{M},d) $ such that $({1.1}_{\lambda_0})$  has two  linearly independent solutions in $l^{2}(\mathcal{I})$, then it is true for all
$ \lambda \in {\Omega}^{\prime}(\mathcal{M},d) .$
\end{lemma}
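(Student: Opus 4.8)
The plan is to show that, under the stated hypothesis, \emph{every} solution of $({1.1}_{\lambda})$ belongs to $l^{2}(\mathcal{I})$ for each $\lambda\in{\Omega}^{\prime}(\mathcal{M},d)$; since by Corollary \ref{dimension} the solution space is two--dimensional, this immediately yields two linearly independent square summable solutions, which is the assertion. First I would exploit the hypothesis together with Corollary \ref{dimension}: because the two--dimensional solution space of $({1.1}_{\lambda_{0}})$ already contains two linearly independent members of $l^{2}(\mathcal{I})$, \emph{all} its solutions are square summable. In particular the fundamental solutions $\varphi(\cdot,\lambda_{0})$ and $\psi(\cdot,\lambda_{0})$ fixed by \eqref{case} lie in $l^{2}(\mathcal{I})$, so that the scalar sequences $\varphi_{1}(\cdot,\lambda_{0}),\varphi_{2}(\cdot,\lambda_{0}),\psi_{1}(\cdot,\lambda_{0}),\psi_{2}(\cdot,\lambda_{0})$ are all in $l^{2}(\mathcal{I})$. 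Write $\rho_{\varphi}(N):=\big(\sum_{s>N}\varphi^{*}(s,\lambda_{0})\varphi(s,\lambda_{0})\big)^{1/2}$ and $\rho_{\psi}(N):=\big(\sum_{s>N}\psi^{*}(s,\lambda_{0})\psi(s,\lambda_{0})\big)^{1/2}$; both tend to $0$ as $N\to\infty$.

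Next, fix $\lambda\in{\Omega}^{\prime}(\mathcal{M},d)$ and let $z=(z_{1},z_{2})^{\mathrm{T}}$ be an arbitrary solution of $({1.1}_{\lambda})$. The preceding lemma supplies constants $k_{1},k_{2}$ (depending only on $N,\lambda,\lambda_{0}$) for which the variation--of--parameters identities \eqref{3000} and \eqref{30001} hold for $t>N+2$. I would take the $l^{2}$--norm over a block $\{N+3,\dots,M\}$, apply the triangle inequality to \eqref{3000}--\eqref{30001}, and estimate the inner sums by Cauchy--Schwarz, using $\big|\sum_{s=N+1}^{t-1}\varphi^{\mathrm{T}}(s,\lambda_{0})z(s)\big|\le\rho_{\varphi}(N)\,\|z\|_{[N+1,M]}$ for $t\le M+1$ (and likewise with $\psi$), where $\|z\|_{[N+1,M]}^{2}:=\sum_{t=N+1}^{M}z^{*}(t)z(t)$. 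Together with $\sum_{t>N+2}|\varphi_{i}(t,\lambda_{0})|^{2}\le\rho_{\varphi}(N)^{2}$ etc., each double--sum contribution is bounded by a fixed multiple of $|\lambda_{0}-\lambda|^{2}\rho_{\varphi}(N)^{2}\rho_{\psi}(N)^{2}\,\|z\|_{[N+1,M]}^{2}$. The extra prefactor $(\lambda_{0}-\mathcal{M}(t))/(\lambda-\mathcal{M}(t))$ in \eqref{30001} is harmless: $\lambda\in{\Omega}^{\prime}(\mathcal{M},d)$ forces $\inf_{t}|\lambda-\mathcal{M}(t)|>0$, hence $\sup_{t}|(\lambda_{0}-\mathcal{M}(t))/(\lambda-\mathcal{M}(t))|\le 1+|\lambda_{0}-\lambda|/\inf_{t}|\lambda-\mathcal{M}(t)|<\infty$. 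Finally, the leading terms $k_{1}\psi_{i}(t,\lambda_{0})+k_{2}\varphi_{i}(t,\lambda_{0})$ contribute only a constant $C=C(N,\lambda,\lambda_{0})$ (built from $|k_{1}|^{2}$, $|k_{2}|^{2}$ and the finite $l^{2}$--masses of $\varphi_{i},\psi_{i}$), independent of $M$.

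Choosing $N$ once and for all so large that the coefficient of $\|z\|_{[N+1,M]}^{2}$ is at most $\tfrac12$, and adding back the two leftover terms $z^{*}(N+1)z(N+1)$ and $z^{*}(N+2)z(N+2)$ (not covered by \eqref{3000}--\eqref{30001}), I would obtain
$$\sum_{t=N+1}^{M}z^{*}(t)z(t)\le C'(N,\lambda,\lambda_{0})+\tfrac12\sum_{t=N+1}^{M}z^{*}(t)z(t),$$
with $C'$ independent of $M$. Absorbing the last term gives $\sum_{t=N+1}^{M}z^{*}(t)z(t)\le 2C'$ for every $M$, and since $\sum_{t=a}^{N}z^{*}(t)z(t)$ is a finite sum, letting $M\to\infty$ yields $z\in l^{2}(\mathcal{I})$. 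As $z$ was an arbitrary solution of $({1.1}_{\lambda})$ and the solution space is two--dimensional by Corollary \ref{dimension}, $({1.1}_{\lambda})$ possesses two linearly independent solutions in $l^{2}(\mathcal{I})$, which is the claim.

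The step I expect to be the main obstacle is the bookkeeping in this absorption estimate: one must keep the index ranges of the iterated sums strictly inside the range of the norm being bounded so the term is genuinely absorbable, handle the two boundary indices $N+1,N+2$ separately, and make sure $C'$ truly does not depend on the truncation point $M$ (in particular that $k_{1},k_{2}$ are frozen once $N$ is chosen). By contrast, the uniform boundedness of the prefactor in \eqref{30001} and the Cauchy--Schwarz bounds on the sums are routine once the hypothesis $\lambda\in{\Omega}^{\prime}(\mathcal{M},d)$ is used, and no compactness or analyticity in $\lambda$ is needed.
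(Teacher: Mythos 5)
Your proposal is correct and follows essentially the same route as the paper: invoke the variation-of-parameters identities \eqref{3000}--\eqref{30001}, bound the iterated sums by Cauchy--Schwarz using the small $l^{2}$-tails of $\varphi(\cdot,\lambda_{0}),\psi(\cdot,\lambda_{0})$ and the uniform bound on $(\lambda_{0}-\mathcal{M}(t))/(\lambda-\mathcal{M}(t))$, then absorb the $\|z\|$-term by choosing the truncation point large. The only cosmetic difference is that you enlarge $N$ itself (freezing $k_{1},k_{2}$ there), whereas the paper keeps $N$ fixed and pushes the starting index $N_{0}$ of the norm; both give the same absorption estimate.
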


\begin{proof}
Suppose that
$\varphi =(\varphi_1 ,\varphi_2 )^{\mathrm{T}}$ and $\psi =(\psi_1 ,\psi_2 )^{\mathrm{T}} $ are linearly independent solutions of $({1.1}_{\lambda_0})$ in $l^{2}(\mathcal{I})$ for some $\lambda_{0} \in {\Omega}^{\prime}(\mathcal{M},d) $ by the assumption. If  $z=(z_1,z_2)^{\mathrm{T}}$  is a solution    of $({1.1}_{\lambda})$ with $\lambda \in {\Omega}^{\prime}(\mathcal{M},d)$, then \eqref{3000} and \eqref{30001} hold.
Applying the Cauchy-Schwarz inequality,  we get from  \eqref{3000} that
$$
\left|z_{1}(t)\right| \leq  (|\varphi_1 (t)|+|\psi_1 (t)|)\Big[\left(|k_{1}|+\left|k_{2}\right|\right) +\left|\lambda-\lambda_{0}\right|(\|\varphi\|+\|\psi\|) \|z\|_{N+1}^{t-1}\Big], \vspace{-2mm}
$$
where   $\|z  \|_{N+1}^{t-1} ={ \big(  \sum_{s=N+1}^{t-1}  { z^{*} (s)  z (s) }  \big) }^{\frac{1}{2}} .$
Then, from the above relation  we get that there exists $K_1>0$ such that
for $\tau>N_0>N+2$,
\begin{equation}\label{32763}
\|z_{1}\|_{N_{0}}^{\tau } \leq K_1\gamma_{N_0} \Big[ 1+   \left|\lambda-\lambda_{0}\right|  \left(\|z\|^{N_{0}}_{N  }+ \|z\|_{N_{0}}^{\tau  }  \right)     \Big], \vspace{-2mm}
\end{equation}
where   $\|z_1 \|_{N_0}^{\tau} ={ \big(  \sum^{\tau}_{s=N_0}  {| z_1 (s)|}^{2} \big) }^{\frac{1}{2}} $ and
$$\gamma_{N_0}={ \left(  \sum^{\infty}_{t=N_0}  {\varphi^*(t)\varphi(t)} \right) }^{\frac{1}{2}}+{ \left(  \sum^{\infty}_{t=N_0}  {\psi^*(t)\psi(t)} \right) }^{\frac{1}{2}}.$$

Furthermore, since  $\lambda \in {\Omega}^{\prime}(\mathcal{M},d),  $  we get $ \inf_{t\in \mathcal{I^{\prime}}} \Big|\lambda -\mathcal{M}(t)   \Big| >0,$ and thus
\begin{equation}\label{1401}
\Big|\frac{  \lambda_0- \mathcal{M}(t)  }{ \lambda- \mathcal{M}(t) }\Big|=1+\frac{ | \lambda_0-\lambda | }{ \inf_{t\in \mathcal{I^{\prime}}} \Big|\lambda -\mathcal{M}(t)   \Big|}< \infty.\vspace{-2mm}
\end{equation}
Similarly, we can get from \eqref{30001} and \eqref{1401} that  there exists $K_2>0$  such that for $\tau>N_0>N+2$,
\begin{equation}\label{3277}
\|z_{2}\|_{N_{0}}^\tau  \leq K_2  {\gamma_{N_0} }\Big[ 1+   \left|\lambda-\lambda_{0}\right|\left( \|z\|^{N_{0}}_{N  }  +\|z\|_{N_{0}}^\tau\right )       \Big]. \vspace{-2mm}
\end{equation}
Since $\varphi,\psi \in l^{2}(\mathcal{I})$,  we have  $\gamma_{N_0}\rightarrow 0 $ as $N_0 \rightarrow \infty. $  Thus, letting $K_0:=\max\{K_1,K_2\}$, we can choose  sufficiently large $N_0 $ satisfying $  K_0 \left|\lambda-\lambda_0 \right| {\gamma_{N_0 } }\leq \frac{1}{4} $.
Then, from \eqref{32763} and \eqref{3277}, we have\vspace{-2mm}
\begin{equation}\label{3288}
\|z_{j}\|_{N_0}^{\tau } \leq  K_0 \gamma_{N_0}\left(1 +|\lambda-\lambda_0|\|z\|^{N_{0}}_{N  }\right ) + \frac{1}{4}\|z\|_{N_0}^{\tau },\ j=1,2, \vspace{-2mm}
\end{equation}
 which implies that $z \in l^{2}(\mathcal{I}). $
This completes the proof.
\end{proof}

Finally, we prove Theorem \ref{LCC}.

\noindent{\bf Proof of Theorem \ref{LCC}.} It is noted that $\mathbb{C} \setminus\mathbb{R}\subset{\Omega}^{\prime}(\mathcal{M},d) $ since the coefficients of $({1.1}_{\lambda})$ are real-valued.
Suppose that there exists  $ \lambda_0 \in   {\Omega}^{\prime}(\mathcal{M},d) $ such that  $({1.1}_{\lambda_0})$  has two  linearly independent solutions
in $l^{2}(\mathcal{I})$. Then it is true for all $\lambda \in \mathbb{C} \setminus \mathbb{R}$ by Lemma 3.4, which implies that $\mathcal{{L}}$ or $(1.1_\lambda)$ is in the  LCC  at $ t=\infty$ by Lemma 3.2.
Otherwise, there exists at most one linearly independent solutions of $(1.1_\lambda)$ in $l^{2}(\mathcal{I})$ for each $\lambda\in{\Omega}^{\prime}(\mathcal{M},d)$.
Then $\mathcal{{L}}$ or $(1.1_\lambda)$ is in the  LPC  at $ t=\infty$ by Lemma 3.2. This completes the proof.

\begin{remark} {\rm It is noted that equations \eqref{00} contain \eqref{3}  as their special case. Therefore, Definition 3.1 and Theorem 3.2 are also
applied to \eqref{3} which is useful in the next section. In fact,
Jirari \cite{Jirari} has considered  singular Sturm-Liouville difference  equations $\tau y_1=\lambda wy_1$ on $\mathcal{I}$, where $\tau$ is given by \eqref{3} and $w>0$ is a weight function.
Similar classification  and result as given by  Definition 3.1  and        Theorem 3.2   were obtained for $\tau y_1=\lambda wy_1$ on $\mathcal{I}$ in \cite{Jirari}.
Definition 3.1 and Theorem 3.2 for  \eqref{3} are their special case of $w\equiv1$.}
\end{remark}

\section{{\large{\bf On perturbations of matrix
difference equations  }}}{}\baselineskip 20pt
\ \ \ The difference expression $\mathcal{{L}}$ can be interpreted as
\begin{equation}\label{15410}
\mathcal{{L}}=\mathcal{{L}}^{(0)}+\mathcal{{L}}^{(1)}  ,
\end{equation}
where
$$
{\mathcal{L}}^{(0)}\left(\begin{array}{l}
y_1  \\
y_2 \end{array}\right)(t):=\left\{\begin{array}{l}{\rm diag}\{- \nabla p  \Delta+q ,d\}\left(\begin{array}{l}
y_1  \\
y_2
\end{array}\right)(t),~ t\in \mathcal{I},\vspace{2mm}\\
{\rm diag}\{0 ,d\}\left(\begin{array}{l}
y_1  \\
y_2
\end{array}\right)(t),~ t=a-1,  \end{array}\right.$$
$$
{\mathcal{L}}^{(1)}\left(\begin{array}{l}
y_1  \\
y_2 \end{array}\right)(t):=\left\{\begin{array}{l}\left(\begin{array}{cc}
0 & -\nabla c +h  \\
c  \Delta+h  & 0
\end{array}\right)\left(\begin{array}{l}
y_1  \\
y_2
\end{array}\right)(t),~ t\in \mathcal{I},\vspace{2mm}\\
\left(\begin{array}{cc}
0 &0  \\
c  \Delta+h  & 0
\end{array}\right)\left(\begin{array}{l}
y_1  \\
y_2
\end{array}\right)(t),~ t=a-1.  \end{array}\right.$$
If  $c$ and $h$ are   bounded on ${\mathcal{I^{\prime}}}$, then it can be verified that the  limit point or limit circle type of $\mathcal{{L}}^{(0)}$ is equal to that of $\mathcal{{L}}$. A natural  question  is whether the limit type is invariant   if $c$ or $h$ is   unbounded on ${\mathcal{I^{\prime}}}$.
Hassi, M\"{o}ller, and Snoo considered equation \eqref{2222} on the interval $[0,\infty)$ with real-valued coefficients $p$, $q$, $c$, $h$, and $d$. It was shown that the limit type of  $\mathfrak{L}^{(0)}$ is different from that of $\mathfrak{L}$ in general when    $c$ or $h$ is   unbounded on $[0,\infty)$ by \cite[Examples 6.3 and 6.4]{Hassi},
where $\mathfrak{L}^{(0)}$ and   $\mathfrak{L}^{(1)}$ are given by
$$
\mathfrak{L}^{(0)}=\left(\begin{array}{cc}
- \displaystyle D p \displaystyle D+q   & 0 \vspace{2mm}\\
0 & d
\end{array}\right),\ \
\mathfrak{L}^{(1)}=\left(\begin{array}{cc}
0 & \displaystyle D c +h  \\
c D+h  & 0
\end{array}\right).
$$
Here, we shall show that this is also true for  $\mathcal{{L}}^{(0)}$ and $\mathcal{{L}}$ when   $\mathcal{{L}}^{(1)}$ is  given here  with    $c$ or  $h$  being    unbounded on $\mathcal{I}$ by two examples.
The   first example shows that  $\mathcal{{L}}^{(0)}$ is  in the  LCC at $t= \infty$ while $\mathcal{{L}}$ is in the LPC at $t= \infty$, and     the second   one shows that   $\mathcal{{L}}^{(0)}$ is in the  LPC  at $t= \infty$ while $\mathcal{{L}}$ is  in the LCC  at $t= \infty$.

\begin{example}
{\rm  Consider $\mathcal{{L}}^{(0)}$  with $p(t)=-4^{t}$, $q(t)=4^{t}$,  and  $d=1 $ for $t\in {\mathcal{I^{\prime}}}=\{t\}_{t=-1}^{+\infty} $.
 It is evident that $\widetilde{p}(t, \lambda)$ and  $  \tilde{q}(t, \lambda)$  associated with  $\mathcal{{L}}^{(0)}(y)=\lambda y$
  are given by\vspace{-2mm}
$$ \widetilde{p}(t, \lambda)= -4^{t},t\in {\mathcal{I^{\prime}}};~  \tilde{q}(t, \lambda)=4^{t},\ t\in \mathcal{I}=\{t\}_{t=0}^{+\infty}.\vspace{-2mm}$$
Therefore,  the corresponding equation \eqref{002} becomes as\vspace{-2mm}
\begin{equation}\label{1501}
\tilde{\tau}(y_1)(t):=\nabla \big(4^{t} \Delta y_1(t)\big)+4^{t}y_1(t)=\lambda y_1(t),~ t\in\mathcal{I}.\vspace{-2mm}
\end{equation}
By {\cite[Example 3.2]{Chen}}, \eqref{1501} is in the   LCC at  $t= \infty$.
Then all its solutions $y_1 $ satisfy $ \sum\limits_{t=0}^{\infty}|y_1(t)|^{2}<\infty.$ In addition, we get from  \eqref{004}  that $y_2=0 $ with $\lambda\neq 1$ since  $c=h=0$ on ${\mathcal{I^{\prime}}}$ and $d(t)\neq0 $ for $t\in {\mathcal{I^{\prime}}},$  which implies that all solutions of $\mathcal{{L}}^{(0)}(y)=\lambda y $ with $\lambda\neq 1$ are in $l^{2}(\mathcal{I})$.  Therefore, $\mathcal{{L}}^{(0)}$ is in the  LCC at  $t= \infty$.

Now, take $\mathcal{{L}}^{(1)}$  with  $h(t)= 2^{t}+2^{-t}$ and $c(t)=0,
$ $t\in {\mathcal{I^{\prime}}}$. Then
$ \mathcal{M}=d=1$ on ${\mathcal{I^{\prime}}}$, and hence
${\Omega}^{\prime}(\mathcal{M},d)=\mathbb{C}\setminus\{1\}$. For $\mathcal{{L}}(y)=\lambda y$  with $\lambda  \in \mathbb{C}\setminus \{1\}$ and
$\mathcal{{L}}$ given by  \eqref{15410},
$\widetilde{p}(t, \lambda)$ and  $  \tilde{q}(t, \lambda)$   are given by\vspace{-2mm}
$$
\widetilde{p}(t, \lambda )=-4^{t}, t\in {\mathcal{I^{\prime}}};~
\tilde{q}(t, \lambda )=4^{t}+\frac{4^{t}+ 4^{-t}+2}{\lambda-1}, ~  t\in\mathcal{I}.\vspace{-2mm}
$$
Therefore, the corresponding equation \eqref{002}  with $ \lambda=0$  becomes as\vspace{-2mm}
\begin{equation}\label{1503}
 \nabla \big( 4^{ t}  \Delta y_1(t) \big)-(4^{-t}+2)y_1(t)=0,\ t\in\mathcal{I}.\vspace{-2mm}
\end{equation}
By  \cite[Therorem 3.11.6]{Jirari},  \eqref{1503}  has  a solution $y_1$
satisfying  $ \sum\limits_{t=0}^{\infty}|y_1(t)|^{2}=\infty.$
Let $y =(y_1,y_2 )^{\mathrm{T}} $ with  $y_2$  given by \eqref{004} with $ \lambda=0$. Then
$y$ is a solution of   $\mathcal{{L}}(y)=0  .$ Clearly   $y  \notin l^{2}(\mathcal{I}).$
Hence,    $\mathcal{{L}}$  is  in the  LPC  at $ t=\infty$.
}
\end{example}

\begin{example}
{\rm Consider $\mathcal{{L}}^{(0)}$  with    $p=1$, $q(t)=4^{t}$, and $d(t)=4^{t}$  for $t\in {\mathcal{I^{\prime}}}=\{t\}_{t=-1}^{+\infty}$.
 It is evident that $\widetilde{p}(t, \lambda)$ and  $  \tilde{q}(t, \lambda)$  associated with  $\mathcal{{L}}^{(0)}(y)=\lambda y$
  are given by\vspace{-2mm}
$$ \widetilde{p}(t, \lambda)=1,t\in {\mathcal{I^{\prime}}};~  \tilde{q}(t, \lambda)=4^{t}, \ t\in \mathcal{I}=\{t\}_{t=0}^{+\infty}, \vspace{-2mm}$$
Therefore, the corresponding equation \eqref{002} becomes as \vspace{-2mm}
\begin{equation}\label{1601}
\nabla (\Delta y_1(t))+4^{t}y_1(t)=\lambda y_1(t),\ t\in\mathcal{I}. \vspace{-2mm}
\end{equation}
By {\rm \cite[Corollary 3.1]{Chen}}, \eqref{1601} has  a solution $y_1$
satisfying $ \sum\limits_{t=0}^{\infty}|y_1(t)|^{2}=\infty.$  Then, $\mathcal{{L}}^{(0)}(y) =\lambda y $ for $\lambda$ with ${\rm Im}\lambda\neq0$ has  a solution $y  \notin l^{2}(\mathcal{I}).$
Therefore, $\mathcal{{L}}^{(0)}$  is  in the  LPC  at $ t=\infty$.

Now, take $\mathcal{{L}}^{(1)}$  with  $c(t)= \sqrt{4^{2t}+4^{t}}$ and $h(t)=0,~t\in {\mathcal{I^{\prime}}}$.   Then $\mathcal{M}(t)=-4^{2t},t\in{\mathcal{I^{\prime}}}, $
and  then $ \sigma(\mathcal{M}) \cup \sigma(d)=  \{-4^{2t}, 4^{t}: t\in {\mathcal{I^{\prime}}} \}. $
Thus ${\Omega}^{\prime}(\mathcal{M},d)=\mathbb{C}\setminus \{-4^{2t},4^{t}: t\in {\mathcal{I^{\prime}}} \}$. For $\mathcal{{L}}(y)=\lambda y$  with $\lambda \in {\Omega}^{\prime}(\mathcal{M},d)$ and
$\mathcal{{L}}$ given by  \eqref{15410},
$\widetilde{p}(t, \lambda)$ and  $  \tilde{q}(t, \lambda)$   are given by
$$
\widetilde{p}(t, \lambda)=1+\frac{4^{2t}+4^{t}}{\lambda-4^{t}},~t\in {\mathcal{I^{\prime}}};~  \tilde{q}(t, \lambda)=4^{t}, ~t  \in \mathcal{I}.
$$
Note that $\lambda=0\in {\Omega}^{\prime}(\mathcal{M},d)$. Then take $ \lambda=0$ and the corresponding equation  \eqref{002}   becomes  as
$\tilde{\tau}(y_1)(t)=0,~t\in \mathcal{I}$, where $\tilde{\tau}$ is given by  \eqref{1501}.
Then $\tilde{\tau}$ is  in the LCC at   $ t=\infty$, which implies that  all   solutions   $y_1(t)$  of $\tilde{\tau}(y_1)(t)=0 $ satisfy $ \sum\limits_{t=0}^{\infty}|y_1(t)|^{2}<\infty,$
In addition,   \eqref{004}   becomes  as
\begin{equation}\label{1604}
y_2(t)=\frac{\sqrt{4^{2t}+4^{t}}}{ -4^{t}} \Delta y_1(t),~ t  \in {\mathcal{I^{\prime}}},
\end{equation}
which, together with  $ \sum\limits_{t=0}^{\infty}|y_1(t)|^{2}<\infty,$  yields that   $ \sum\limits_{t=0}^{\infty}|y_2(t)|^{2}<\infty.$
Let $y =(y_1,y_2 )^{\mathrm{T}} $ with  $y_2$ given by \eqref{1604}. Then
$y$ is a solution of   $ \mathcal{{L}}(y)=0  .$
Clearly   $y =(y_1,y_2 )^{\mathrm{T}}   \in l^{2}(\mathcal{I}).$
Hence,  $\mathcal{{L}}$  is in the  LCC  at $ t=\infty$.
}
\end{example}

\section{{\large{\bf  Limit point criteria    }}}{}\baselineskip 20pt
\ \ \  \ In this section, we shall establish two criteria of the limit point case for   $\mathcal{{L}}$ in terms of its coefficients which extend the existing results for Sturm-Liouville differential and difference expressions to
matrix
difference expressions $\mathcal{{L}}$.

\begin{theorem}\label{5252}
If there exist  $N \in \mathcal{I}$  and $K$ such that
$\left|\displaystyle\frac{c(t)}{p(t)} \right|\leq K$ for $t> N,$  and $
\sum\limits_{t =N }^{\infty} \dfrac{1}{|p(t)|}=\infty
$, then   $\mathcal{{L}}$ is in the  LPC  at $ t=\infty$.
\end{theorem}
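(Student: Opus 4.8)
The plan is to argue by contradiction through the equivalent characterization in Theorem \ref{LCC}: it suffices to show that no $\lambda_{0}\in\Omega'(\mathcal{M},d)$ can make $({1.1}_{\lambda_{0}})$ possess two linearly independent solutions in $l^{2}(\mathcal{I})$. So I would suppose that such a $\lambda_{0}$ and two linearly independent solutions $\varphi=(\varphi_{1},\varphi_{2})^{\mathrm{T}},\ \psi=(\psi_{1},\psi_{2})^{\mathrm{T}}\in l^{2}(\mathcal{I})$ of $({1.1}_{\lambda_{0}})$ exist, and derive a contradiction with the summability hypothesis. Since $\Omega'(\mathcal{M},d)\subset\Omega(d)$, Lemma \ref{constant} gives that
$$w:=[\psi(t,\lambda_{0}),\overline{\varphi}(t,\lambda_{0})]=\psi_{1}(t+1)\varphi^{[1]}(t)-\psi^{[1]}(t)\varphi_{1}(t+1)$$
is a constant on $\mathcal{I}'$; moreover $w\neq 0$, because $w$ is, up to sign, the Wronskian of the two solutions $(\varphi_{1}(t+1),\varphi^{[1]}(t))^{\mathrm{T}}$ and $(\psi_{1}(t+1),\psi^{[1]}(t))^{\mathrm{T}}$ of the first-order system \eqref{212}, which cannot vanish for linearly independent $\varphi,\psi$ in view of $\mathrm{det}(I_{2}-A(t,\lambda_{0}))\equiv 1$ and the uniqueness in Theorem \ref{initial}.

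The crucial step would be to rewrite $w$ by means of the quasi-difference \eqref{0041}, $y^{[1]}=p\,\Delta y_{1}+c\,y_{2}$, so as to expose the factor $1/p$. Substituting $\varphi^{[1]}(t)=p(t)\Delta\varphi_{1}(t)+c(t)\varphi_{2}(t)$ and $\psi^{[1]}(t)=p(t)\Delta\psi_{1}(t)+c(t)\psi_{2}(t)$ and collecting terms with $\Delta y_{1}(t)=y_{1}(t+1)-y_{1}(t)$, one obtains for $t\in\mathcal{I}'$
$$w=p(t)\,W_{1}(t)+c(t)\,W_{2}(t),\qquad W_{1}(t):=\varphi_{1}(t+1)\psi_{1}(t)-\psi_{1}(t+1)\varphi_{1}(t),\quad W_{2}(t):=\psi_{1}(t+1)\varphi_{2}(t)-\varphi_{1}(t+1)\psi_{2}(t).$$
Dividing by $p(t)$ and invoking $|c(t)/p(t)|\le K$ for $t>N$ yields $|w|/|p(t)|\le|W_{1}(t)|+K|W_{2}(t)|$ for $t>N$. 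Since $|W_{1}(t)|$ and $|W_{2}(t)|$ are each dominated by a sum of two products $|\varphi_{j}(s)|\,|\psi_{k}(s')|$ with $s,s'\in\{t,t+1\}$, I would sum over $t$ from $N+1$ to $\infty$ and apply the Cauchy--Schwarz inequality together with $\varphi,\psi\in l^{2}(\mathcal{I})$ to get
$$|w|\sum_{t=N+1}^{\infty}\frac{1}{|p(t)|}\ \le\ \sum_{t=N+1}^{\infty}\big(|W_{1}(t)|+K|W_{2}(t)|\big)<+\infty .$$
As $w\neq 0$ and $|p(N)|>0$, this forces $\sum_{t=N}^{\infty}1/|p(t)|<\infty$, contradicting the hypothesis; hence $\mathcal{L}$ is in the LPC at $t=\infty$.

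The main obstacle, I expect, is not any single estimate but the choice of the right invariant. Passing instead to the reduced scalar equation \eqref{002} would replace $p(t)$ by $\widetilde{p}(t,\lambda_{0})=p(t)+(c^{2}(t)-h(t)c(t))/(\lambda_{0}-d(t))$, which need not be comparable to $p(t)$ when $c$ is unbounded, so the hypothesis $\sum 1/|p|=\infty$ would be lost. Building the Wronskian from $y^{[1]}$ keeps $p(t)$ explicit, and the extra off-diagonal contribution $c(t)W_{2}(t)$ is controlled by exactly the quantity the theorem assumes bounded, namely $c/p$. The remaining points — the algebraic splitting of $w$, the elementary bounds on $W_{1},W_{2}$, and the Cauchy--Schwarz step — are routine, and I would not anticipate difficulties there.
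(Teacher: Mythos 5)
Your proposal is correct and follows essentially the same route as the paper's own proof: both take two linearly independent square summable solutions, use Lemma \ref{constant} to get a constant nonzero Lagrange bracket, expand it via the quasi-difference \eqref{0041} into the form $p(t)W_1(t)+c(t)W_2(t)$, divide by $p(t)$, invoke $|c(t)/p(t)|\le K$, and sum with the Cauchy--Schwarz inequality to contradict $\sum 1/|p(t)|=\infty$. The only cosmetic difference is that the paper works with the specific solutions $\varphi,\psi$ normalized by \eqref{case}, for which the bracket is exactly $\pm 1$, whereas you justify $w\neq 0$ for arbitrary independent solutions via Theorem \ref{initial}; both are fine.
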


\begin{proof}
Suppose on the contrary that  $\mathcal{{L}}$ is  in the  LCC  at $ t=\infty$.  Then,  $\varphi= (\varphi_{1}, \varphi_{2} )^{\mathrm{T}}  $  and $\psi=(\psi_{1}, \psi_{2})^{\mathrm{T}}$
given in Section 3  satisfying   \eqref{case} are linearly independent solutions of  \eqref{00} in $l^{2}(\mathcal{I})$.
Further, by Lemma \ref{constant} and  \eqref{case}, one has $[\varphi(t), \overline{\psi}(t)]=1$ on $  {\mathcal{I^{\prime}}}$,
which, together with  \eqref{0041}, yields  that
$$
p(t)\left[{\varphi}_{1}(t+1) {\psi}_{1}(t )- {\varphi}_{1}(t) {\psi}_{1}(t+1)\right] +c(t)\left[{ \psi}_{1}(t+1) {\varphi}_{2}(t)- {\psi}_{2}(t) {\varphi}_{1}(t+1)\right]=-1.
$$
Since  $\left|\displaystyle\frac{c(t)}{p(t)} \right|\leq K$ for $t> N,$ it follows that for $t> N,$\vspace{-2mm}
\begin{equation}\label{4311}
\begin{gathered}
\left| {\psi}_{1}(t+1)\right|\left( {\varphi}_{1}(t)+ K\left| {\varphi}_{2}(t)\right| \right)
+\left| {\varphi}_{1}(t+1)\right|\left( {\psi}_{1}(t)+ K\left| {\psi}_{2}(t)\right| \right)
\geq \frac{1}{|p(t)|} .
\end{gathered}\vspace{-2mm}
\end{equation}
By the Cauchy's inequality, the left-hand side of \eqref{4311} is summable, which contradicts  to  $
\sum\limits_{t =N }^{\infty} \displaystyle\frac{1}{|p(t)|}=\infty .$  Therefore, $\mathcal{{L}}$ is  in the LPC  at $ t=\infty$. This completes the proof.
\end{proof}

\begin{remark}
\quad
{\rm
\begin{itemize}
\item [{\rm (1)}] It is noted that the criterion given by Theorem 5.1 only depending on the coefficients $p(t)$ and $c(t)$ for $ t > N .$\vspace{-2mm}
\item [{\rm (2)}] By Theorem 5.1,
${\mathcal{L}}^{(0)}$ is in the  LPC  at $ t=\infty$ if $
\sum^{\infty}\limits_{t=N } \dfrac{1}{|p(t)|}=\infty$, and this limit point case is invariant under the perturbation  $\mathcal{{L}}^{(1)}$ under   condition $\left|\displaystyle\frac{c(t)}{p(t)} \right|\leq K , t > N$.
\vspace{-2mm}
\item [{\rm (2)}]
Hinton and  Lewis \cite{Hinton} considered equation $\tau y_1=\lambda wy_1$, i.e.,\vspace{-2mm}
\begin{equation}\label{1701}
-\nabla (p(t) \Delta y_1 (t))+q(t)y_1 (t)=\lambda w(t) y_1(t), ~ t\in \mathcal{I},\vspace{-2mm}
\end{equation}
where $ w(t)>0$, $p$ and $q$ are real-valued on $\mathcal{I}$. By \cite[Theorem 10]{Hinton}, if $$ \sum_{t \in \mathcal{I} } \dfrac{ {(w(t) w(t+1))} ^{\frac{1}{2}}  }{|p(t)|}=\infty,$$ then \eqref{1701}  is in the  LPC  at $ t=\infty$. It is noted that \eqref{00} contains  \eqref{1701} as its special case of $h(t)=c(t)\equiv0$  and $w(t)\equiv1$ on ${\mathcal{I^{\prime}}} $ and   $d(t)\neq0$ for $t\in {\mathcal{I^{\prime}}}.$
Then, Theorem 5.1 is a generalization of  \cite[Theorem 10]{Hinton} with $w(t)\equiv1$.

\end{itemize}
}
\end{remark}

\begin{theorem}\label{51512}
If  $p(t)>0,~t\in \mathcal{I}$ and there exist $N\in \mathcal{I}$, a sequence of positive numbers $\{M(t)\}_{t=N}^{\infty}$, and   positive constants $k_{j}, ~1\leq j\leq4, $ such that for all $t > N$,
\begin{align*}
& 1) \quad|c(t)|+|c(t-1)| \leq k_{1} M(t), \  |h(t)|  \leq k_{2} M(t),\vspace{-2mm}\\
& 2) \quad  q(t) \geq-k_{3} M(t),  \hspace{12cm}\vspace{-2mm}\\
& 3) \quad \frac{p^{\frac{1}{2}}(t-1)|\nabla M(t)|}{M^{\frac{1}{2}}(t) M(t-1)} \leq k_{4},\\
& 4)   \quad \sum_{t=N}^{\infty} \frac{1}{\left(p^{2}(t-1 )+c^{2}(t-1 )\right)^{\frac{1}{4}} M^{\frac{1}{2}}(t)}=\infty, \vspace{-2mm}
\end{align*}
then $\mathcal{{L}}$ is in the LPC  at $ t=\infty$.
\end{theorem}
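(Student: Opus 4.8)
The plan is to argue by contradiction: assume $\mathcal{{L}}$ is in the LCC at $t=\infty$ and derive a contradiction with condition~4). First I would fix a purely imaginary $\lambda$ with $\lambda\neq 0$, so that $\lambda\in{\Omega}^{\prime}(\mathcal{M},d)$ and, since $\mathcal{{L}}$ is in the LCC, Lemma~\ref{weyl} together with Corollary~\ref{dimension} gives that \emph{every} solution of \eqref{00} lies in $l^{2}(\mathcal{I})$; in particular the solutions $\varphi=(\varphi_{1},\varphi_{2})^{\mathrm{T}}$ and $\psi=(\psi_{1},\psi_{2})^{\mathrm{T}}$ fixed by \eqref{case} have all four components $\varphi_{1},\varphi_{2},\psi_{1},\psi_{2}$ in $l^{2}(\mathcal{I})$. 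By Lemma~\ref{constant} and \eqref{case}, $[\varphi(t),\overline{\psi}(t)]\equiv 1$ on $\mathcal{I^{\prime}}$, which by the definition \eqref{0041} of the quasi-difference reads
\[
\varphi_{1}(t+1)\,\psi^{[1]}(t)-\varphi^{[1]}(t)\,\psi_{1}(t+1)=1,\qquad t\in\mathcal{I^{\prime}} .
\]
The purely imaginary choice of $\lambda$ is made so that $\operatorname{Re}\bigl(q(t)-\lambda\bigr)=q(t)\geq -k_{3}M(t)$, which is what lets condition~2) enter the argument.

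The technical heart is the weighted estimate
\[
\sum_{t>N}\frac{\bigl|y^{[1]}(t)\bigr|^{2}}{\bigl(p^{2}(t)+c^{2}(t)\bigr)^{1/2}\,M(t+1)}<\infty,\qquad y\in\{\varphi,\psi\} .
\]
I would prove it via a weighted summation-by-parts identity (in the spirit of the Green's formula of Lemma~\ref{21}): multiply the scalar relation $\nabla y^{[1]}(t)=\bigl(q(t)-\lambda\bigr)y_{1}(t)+h(t)y_{2}(t)$ on $\mathcal{I}$ by $\overline{y_{1}(t)}$ divided by a weight built from $p$, $c$, $M$, sum over a finite window, and take real parts. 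The identity then displays: a nonnegative ``discrete energy'' (a weighted sum of $p(t)\bigl|\Delta y_{1}(t)\bigr|^{2}$, genuinely $\geq 0$ because $p>0$); a $q$-term, bounded below by means of condition~2) and $y_{1}\in l^{2}(\mathcal{I})$; mixed terms with the factors $c$ and $h$, dominated via $|c|,|h|\leq\text{const}\cdot M$ from condition~1), Cauchy--Schwarz, and $y_{1},y_{2}\in l^{2}(\mathcal{I})$; and an error term from differencing the weight, controlled exactly by the quantity $p^{1/2}(t-1)|\nabla M(t)|/\bigl(M^{1/2}(t)M(t-1)\bigr)$ of condition~3). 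Combining this with $y^{[1]}=p\Delta y_{1}+c\,y_{2}$ and $y_{2}\in l^{2}(\mathcal{I})$ produces the stated bound.

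Granting this, the contradiction is short. From the Wronskian identity, $1\leq\bigl|\varphi_{1}(t+1)\bigr|\bigl|\psi^{[1]}(t)\bigr|+\bigl|\varphi^{[1]}(t)\bigr|\bigl|\psi_{1}(t+1)\bigr|$ for all $t\in\mathcal{I^{\prime}}$. Dividing by $\bigl(p^{2}(t)+c^{2}(t)\bigr)^{1/4}M^{1/2}(t+1)$ and summing over $t\geq N$, the left-hand side diverges, being (after the reindexing $s=t+1$, up to finitely many terms) exactly the series of condition~4). The right-hand side, however, is finite: by Cauchy--Schwarz,
\[
\sum_{t}\frac{\bigl|\varphi_{1}(t+1)\bigr|\bigl|\psi^{[1]}(t)\bigr|}{\bigl(p^{2}(t)+c^{2}(t)\bigr)^{1/4}M^{1/2}(t+1)}\leq\left(\sum_{t}\bigl|\varphi_{1}(t+1)\bigr|^{2}\right)^{1/2}\left(\sum_{t}\frac{\bigl|\psi^{[1]}(t)\bigr|^{2}}{\bigl(p^{2}(t)+c^{2}(t)\bigr)^{1/2}M(t+1)}\right)^{1/2}<\infty
\]
(the first factor because $\varphi_{1}\in l^{2}(\mathcal{I})$, the second by the core estimate), and the other term is handled the same way. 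This contradiction shows $\mathcal{{L}}$ is in the LPC at $t=\infty$.

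The main obstacle is the core estimate. One has to select the weight so that the leading energy term is precisely a sum of $p(t)|\Delta y_{1}(t)|^{2}$ against the reciprocal of the weight of condition~4) --- only then do the four hypotheses 1)--3) and $p>0$ each control exactly one of the terms that summation by parts produces; the error terms then have to be absorbed back into the energy through Cauchy--Schwarz with a constant strictly less than $1$; and the boundary terms generated at the moving endpoint of the window must be shown not to obstruct the limit, which I would arrange by a standard subsequence argument using $\varphi,\psi\in l^{2}(\mathcal{I})$.
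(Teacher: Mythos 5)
Your overall strategy coincides with the paper's: assume LCC, take a nonreal $\lambda$, use the constancy of the bracket $[\varphi,\overline\psi]\equiv 1$ from Lemma \ref{constant} and \eqref{case}, and contradict condition 4) via Cauchy--Schwarz after establishing a weighted energy estimate by summation by parts. The genuine gap is exactly at the step you defer to ``a standard subsequence argument using $\varphi,\psi\in l^{2}(\mathcal{I})$''. Multiplying $\nabla y^{[1]}(t)=(q(t)-\lambda)y_1(t)+h(t)y_2(t)$ by $\overline{y}_1(t)/M(t)$ and summing from $N$ to $t$ gives, for any $l^2$ solution, an identity of the form $p(t)(\Delta y_1(t))\overline{y}_1(t)/M(t)=G(t)+\cdots$ with $G(t)=\sum_{s=N}^{t}p(s-1)|\nabla y_1(s)|^{2}/M(s)$, and conditions 1)--3) plus $y\in l^{2}(\mathcal{I})$ yield $\operatorname{Re}\bigl\{p(t)(\Delta y_1(t))\overline{y}_1(t)/M(t)\bigr\}\geq G(t)-\tilde{k}_1G^{1/2}(t)+\tilde{k}_2$ (this is \eqref{4004}--\eqref{4005} in the paper). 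The moving boundary term on the left is \emph{not} controllable from square-summability along a subsequence: nothing in the hypotheses bounds $p(t)/M(t)$, and $y\in l^{2}(\mathcal{I})$ gives no pointwise control of $\Delta y_1(t)$, so the usual device (picking $t_k$ with $t_k|y_1(t_k)|^2\to0$ or similar) does not apply. The missing idea, which is the crux of the paper's proof, is a sign/growth dichotomy: if $G(t)\to\infty$, the displayed inequality forces $\operatorname{Re}\{(\Delta y_1(t))\overline{y}_1(t)\}>0$ for all large $t$ (since $p,M>0$), i.e.\ $\operatorname{Re}\{y_1(t+1)/y_1(t)\}>1$, so $|y_1(t)|$ is eventually strictly increasing and bounded away from zero, contradicting $y_1\in l^{2}(\mathcal{I})$; hence $G(\infty)<\infty$. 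Put differently, the subsequence on which the boundary term has nonpositive real part exists, but its existence is proved by this eventual-positivity argument, not by $l^2$ decay. Without this step your core estimate on $|y^{[1]}|^2$ is unproved and the final Cauchy--Schwarz contradiction with 4) has nothing to stand on.

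A secondary point: in the summation by parts the weight must be $1/M(t)$, not the reciprocal of the weight appearing in condition 4) as your closing paragraph suggests; with $1/M$ the error produced by differencing the weight is exactly the quantity bounded in condition 3), whereas a weight containing $\left(p^{2}+c^{2}\right)^{1/4}$ generates differences of $p$ and $c$ that no hypothesis controls. Granted $G(\infty)<\infty$, your stated bound $\sum_t |y^{[1]}(t)|^{2}\big/\bigl((p^{2}(t)+c^{2}(t))^{1/2}M(t+1)\bigr)<\infty$ does follow, using $y^{[1]}=p\Delta y_1+cy_2$, $p^{2}/(p^{2}+c^{2})^{1/2}\leq p$, $c^{2}/(p^{2}+c^{2})^{1/2}\leq|c|\leq k_1M(t+1)$ and $y_2\in l^{2}(\mathcal{I})$, and then your Wronskian-plus-Cauchy--Schwarz conclusion against condition 4) is sound and is essentially the paper's final step (the paper splits the bracket identity slightly differently, via $p/(p^2+c^2)^{1/4}\le p^{1/2}$ and $|c|/(p^2+c^2)^{1/4}\le |c|^{1/2}$, to the same effect).
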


\begin{proof}
Suppose that  $ y=(y_{1}, y_{2})^{\mathrm{T}}  $ is a solution  of  \eqref{00} with $\lambda=i$. Then, we have
\begin{equation}\label{4002}
-\nabla(p(t) \Delta y_1(t)) -\nabla(c(t) y_2(t))+h(t)  y_2(t) +(q(t) -i) y_1(t)  =0,~t\in \mathcal{I}.
\end{equation}
Multiplying  both side of \eqref{4002} by $ \displaystyle\frac{\overline{y}_1(t)}{M(t)}$ and with a simple calculation, we get that
\begin{equation}\label{4003}
\begin{gathered}
\nabla\left(\frac{p(t)(\Delta y_1(t)) \overline{y}_1(t)}{M(t)}\right)=\frac{p(t-1)|\nabla y_1(t)|^{2}}{M(t)}-\frac{p(t-1)(\nabla M(t))(\nabla y_1(t)) \overline{y}_1(t-1)}{M(t) M(t-1)} \\
+\frac{(q(t)-i)|y_1(t)|^{2}}{M(t)}+\frac{h(t) y_2(t) \overline{y}_1(t)}{M(t)}-\frac{\nabla(c(t)y_2(t)) \overline{y}_1(t)}{M(t)}.
\end{gathered}
\end{equation}
Summing up \eqref{4003} from $N$ to $t$ yields
\begin{equation}\label{4004}
\begin{gathered}
\frac{p(t)(\Delta y_1(t)) \overline{y}_1(t)}{M(t)}=G(t)-\sum_{s=N}^{t} \frac{p(s-1)(\nabla M(s))(\nabla y_1(s)) \overline{y}_1(s-1)}{M(s) M(s-1)} \\
+\sum_{s=N}^{t} \frac{(q(s)-i)|y_1(s)|^{2}}{M(s)} + \sum_{s=N}^{t} \frac{h(s) y_2(s) \overline{y}_1(s)}{M(s)}  - \sum_{s=N}^{t} \frac{\nabla(c(s)y_2(s)) \overline{y}_1(s)}{M(s)}+c_{0}
\end{gathered}
\end{equation}
where
$$
G(t)=\sum_{s=N}^{t} \frac{p(s-1)|\nabla y_1(s)|^{2}}{M(s)} \quad \text { and } \quad c_{0}=\frac{p(N-1)(\Delta y_1(N-1)) \overline{y}_1(N-1)}{M(N-1)}.
$$
Since $p(t)>0$ and $M(t)>0,$   $\lim\limits _{t \rightarrow \infty} G(t) $ exists which may be infinity.  Now suppose that $y=\left(y_1, y_2\right)^{\mathrm{T}} \in l^{2}(\mathcal{I}).$
We shall show that $\lim\limits _{t \rightarrow \infty} G(t)<\infty$  in this case.
By the assumptions 1)-3), the Cauchy's inequality,  $y \in l^{2}(\mathcal{I}),$ it follows from \eqref{4004}  that there exist $\tilde{k}_1, \tilde{k}_2\in \mathbb{R}$ such that for $t> N$,
\begin{equation}\label{4005}
\operatorname{Re}\left\{\frac{p(t)(\Delta y_1(t)) \overline{y}_1(t)}{M(t)}\right\} \geq G(t)-\tilde{k}_1 G^{1 / 2}(t)+ \tilde{k}_2.
\end{equation}
Assume  on the contrary that $\lim\limits _{t \rightarrow \infty} G(t)=\infty$. Then, \eqref{4005} yields that there exists a positive integer $N_{1} \geq N$ such that
\begin{equation}\label{411}
\operatorname{Re}\{(\Delta y_1(t)) \overline{y}_1(t)\}>0, \quad t > N_{1},
\end{equation}
i.e.,
\begin{equation}\label{412}
\frac{1}{2}( y_1(t+1) \overline{y}_1(t)+\overline{y}_1(t+1)  y_1(t))-\overline{y}_1(t)  y_1(t)>0, \quad t > N_{1} .
\end{equation}
It is obtained from \eqref{411} that $y_1(t) \neq 0$ for $t > N_{1}$. Therefore, \eqref{412} implies that
$$
\operatorname{Re}\left\{\frac{y_1(t+1)}{y_1(t)}\right\}>1, \quad t> N_{1}.
$$
Hence,  $ \sum\limits_{t=N}^{\infty}|y_1(t)|^{2}=\infty,$  which is contrary to assumption $y \in l^{2}(\mathcal{I})$. Therefore, we have $\lim\limits _{t \rightarrow \infty} G(t)<\infty$.

Now, let  $\varphi= (\varphi_{1}, \varphi_{2} )^{\mathrm{T}}  $  and $\psi=(\psi_{1}, \psi_{2})^{\mathrm{T}}$ be  solutions of   \eqref{00} with $\lambda=i$  satisfying \eqref{case}.  Then $\varphi$ and $\psi$ are  linearly independent.
Further,  by Lemma \ref{constant}, \eqref{0041}, and  \eqref{case},  we get
\begin{equation}\label{414}
\begin{gathered}
 p(t-1)[\psi_1(t) \nabla\varphi_1(t)- \varphi_1(t)\nabla\psi_1(t)]\\
 +c(t-1) [\psi_1(t )\varphi_2(t-1)-\varphi_1(t )\psi_2(t-1) ]=-1,  \quad t\in \mathcal{I},
\end{gathered}
\end{equation}
which implies that\vspace{-2mm}
\begin{equation}\label{4122}
\begin{gathered}
\ \ \ \ \ \ \ \ \ \frac{p^{\frac{1}{2}}(t-1)}{  M^{\frac{1}{2}}(t)} \Big(|\psi_1(t) \nabla \varphi_1(t)|+|(\nabla \psi_1(t)) \varphi_1(t)| \Big)\\
\ \ \ \ \ \ \ \ \ \ \ \ \  + \frac{c^{\frac{1}{2}}(t-1)}{  M^{\frac{1}{2}}(t)}  \Big(|\psi_1(t)  \varphi_2(t-1)|+|\psi_2(t-1) \varphi_1(t)| \Big) \\
\geq \frac{1}{\left(p^{2}(t-1)+c^{2}(t-1)\right)^{\frac{1}{4}} M^{\frac{1}{2}}(t)}.
\end{gathered}\vspace{-2mm}
\end{equation}
If $\varphi, \psi \in l^{2}(\mathcal{I}),$ then $$\sum_{t=N}^{\infty} \displaystyle\frac{p(t-1)|\nabla \varphi_1(t)|^{2}}{M(t)}<\infty\ {\rm and}\ \sum_{t=N}^{\infty} \displaystyle\frac{p(t-1)|\nabla \psi_1(t)|^{2}}{M(t)}<\infty$$
by the above discussions. Then,  by the Cauchy's inequality and the first assumption  in 1), we  get from \eqref{4122} that\vspace{-2mm}
$$  \sum_{t=N}^{\infty} \frac{1}{\left(p^{2}(t-1 )+c^{2}(t-1 )\right)^{\frac{1}{4}} M^{\frac{1}{2}}(t)}<\infty, $$
which contradicts to assumption  4). Then $\mathcal{{L}}$ is in the  LPC  at $ t=\infty$. This
completes the proof.
\end{proof}

\begin{remark}
\quad
{\rm
\begin{itemize}
\item [{\rm (1)}] By Theorem 5.2,
${\mathcal{{L}}}^{(0)}$ is in the  LPC  at $ t=\infty$ under the following conditions: $p(t)>0$, 2) and 3) of Theorem 5.2, and
\begin{equation}\label{2101}
\sum_{t=N}^{\infty} \frac{1}{ {(p (t-1 )M (t))}^{  \frac{1}{2}}  }=\infty.  \vspace{-2mm}
\end{equation}
This limit point type of ${\mathcal{{L}}}^{(0)}$ is invariant under the perturbation  $\mathcal{{L}}^{(1)}$ in the case that conditions  1) and 4) of Theorem 5.2 hold.
\vspace{-2mm}
\item [{\rm (2)}] There were some limit point criteria in terms of coefficients for Sturm-Liouville differential   equations, e.g.,
    \cite{Everitt, Evans,Levinson}. Among them, there is   a well-known limit point criterion for Sturm-Liouville differential equations given by Levinson
 \cite[Theorem IV]{Levinson}.
Mingarelli \cite{Mingarelli} extended it to equation \eqref{1701} with $w(t)\equiv1 $  on ${\mathcal{I^{\prime}}} $,  i.e., equation \eqref{3}.
By \cite[Theorem 1]{Mingarelli}, if  there exist $k_{1}$, $k_{2}>0$, and $N\in \mathcal{I}$ such that     \vspace{-2mm}
\begin{align*}
&\ \ \ \  {\rm i})~q(t) \geq-k_{1} M(t), \ t>N, \hspace{12cm}\vspace{-2mm}\\
&\ \ \ \  {\rm ii}) ~ \frac{p^{\frac{1}{2}}(t-1)|\nabla M(t)|}{M^{\frac{1}{2}}(t) M(t-1)} \leq k_{2}, \ t>N,\vspace{-2mm}
\end{align*}
and \eqref{2101} holds,    then \eqref{1701}  is in the LPC  at $ t=\infty$. Clearly, Theorem 5.1 is a generalization of  \cite[Theorem 1]{Mingarelli} with  $w(t)\equiv1$ for Sturm-Liouville difference equation to matrix difference equation $(1.1_\lambda)$.

\end{itemize}
}
\end{remark}

\section{{{\sc\normalsize\bf Acknowledgements}}}\baselineskip 19pt
\ \ \ \
This research was supported by the NNSF of China (Grant 11971262), the NNSFs of Shandong Province (Grants ZR2020MA014   and ZR2019MA038).

\vspace{1cm}
 $\mbox{}\hfill\Box$
\end{document}